\renewcommand{\eprint}[1]{\href{https://arxiv.org/abs/#1}{#1}}
\newtheorem{Thm}{Theorem}[section]
\newtheorem{Lem}[Thm]{Lemma}
\newtheorem{Prop}[Thm]{Proposition}
\newtheorem{Cor}[Thm]{Corollary}
\newtheorem{Con}[Thm]{Conjecture}
\theoremstyle{definition}
\theoremstyle{remark}
\newtheorem{Rem}[Thm]{Remark}
\newtheoremstyle{named}{}{}{\itshape}{}{\bfseries}{.}{.5em}{#1 #3}
\theoremstyle{named}
\def\Z{\mathbb{Z}}
\def\g{\mathfrak{g}}
\def\Frenkel:2013uda{\mathfrak{h}}
\def\cL{\mathcal{L}}
\def\cV{\mathcal{V}}
\def\cW{\mathcal{W}}
\def\aa{\mathbold{a}}
\def\degs{\mathbold{d}}
\def\uu{\mathbold{u}}
\def\zz{\mathbold{z}}
\def\xx{\mathbold{x}}
\def\bo{\textbf{o}}
\def\=>{\Longrightarrow}
\def\to{\longrightarrow}
\def\o+{\oplus}
\def\bo+{\bigoplus}
\def\<{\langle}
\def\>{\rangle}
\def\({\left(}
\def\){\right)}
\def\^{\wedge}
\def\+{\dagger}
\def\dd[#1,#2]{\frac{d#1}{d#2}}
\def\del[#1,#2]{\frac{\partial #1}{\partial #2}}
\def\over[#1]{\overline{#1}}
\def\vec[#1]{\overrightarrow{#1}}
\def\mr@ignsp#1 {\ifx\:#1\@empty\else #1\expandafter\mr@ignsp\fi}%
\newcommand{\multiref}[1]{\begingroup
\xdef\mr@no@sparg{\expandafter\mr@ignsp#1 \: }%
\def\mr@comma{}%
\@for\mr@refs:=\mr@no@sparg\do{\mr@comma\def\mr@comma{,}\ref{\mr@refs}}%
\endgroup}
\newcommand{\hypref}[2]{\ifx\href\asklFrenkel:2013udaas #2\else\href{#1}{#2}\fi}
\tikzset{->-/.style={decoration={
  markings,
  mark=at position .5 with {\arrow{latex}}},postaction={decorate}}}
\tikzset{
    >=latex
    }
\newcommand{\nc}{\newcommand}
\nc{\on}{\operatorname}
\nc{\la}{\lambda}
\nc{\wh}{\widehat}
\nc{\ghat}{\wh\g}
\nc{\mb}{\mathbf}
\begin{document}
\title{Quantum K-theory of Quiver Varieties at Roots of Unity}

\author[P. Koroteev]{Peter Koroteev}
\address{
Department of Mathematics,
University of California,
Berkeley, CA 94720, USA
and
Beijing Institute for Mathematical Sciences and Applications,
Beijing Huairou District, 101408, China
}

\author[A. Smirnov]{Andrey Smirnov}
\address{
Department of Mathematics,
University of North Carolina at Chapel Hill, 
Chapel Hill, NC 27599, USA
}

\date{\today}

\numberwithin{equation}{section}

\begin{abstract}
Let $\Psi(\zz,\aa,q)$ be a fundamental solution matrix of the quantum difference equation of a Nakajima variety $X$. In this work, we prove that the operator
$$
\Psi(\zz,\aa,q) \Psi(\zz^p,\aa^p,q^{p^2})^{-1}
$$
has no poles at the primitive complex $p$-th roots of unity $q=\zeta_p$. As a byproduct, we show that the iterated product of the operators ${\bf M}_{\cL}(\zz,\aa,q
)$ from the $q$-difference equation on $X$: 
$$
{\bf M}_{\cL} (\zz q^{(p-1)\cL},\aa,q)   \cdots {\bf M}_{\cL} (\zz q^{\cL},\aa,q)  {\bf M}_{\cL} (\zz ,\aa,q)
$$
evaluated at $q=\zeta_p$ has the same eigenvalues as ${\bf M}_{\cL} (\zz^p,\aa^p,q^p)$.

Upon a reduction of the quantum difference equation of $X$ to the quantum differential equation over the field of finite characteristic, the above iterated product transforms into a Gro\-then\-diek-Katz $p$-curvature of the corresponding quantum connection whe\-re\-as ${\bf M}_{\cL} (\zz^p,\aa^p,q^p)$ becomes a certain Frobenius twist of that connection. In this way, we give an explicit description of the spectrum of the $p$-curvature of quantum connection for Nakajima varieties. 
\end{abstract}

\maketitle


\section{Introduction}
\subsection{The Quantum Difference Equation}
Enumerative algebraic geometry (quan\-tum K-the\-ory) of Nakajima varieties is governed by quantum difference equations (QDE) \cite{Okounkov:2022aa} which have the following form
\begin{align} \label{QDEintro}
\Psi(\zz q^{\cL},\aa,q) \cL  = {\bf M}_{\cL} (\zz,\aa,q) \Psi(\zz,\aa,q), \ \ \ \cL \in \mathcal{Z}\,,
\end{align}
where $\mathcal{Z} \cong \mathbb{Z}^l$ is the lattice of tautological line bundles on a Nakajima variety $X$ and $l$ denotes the number of vertices in the corresponding quiver. The variables $\zz=(z_1,\dots,z_l)$ and $\aa=(a_1,\dots, a_m)$ denote the K\"ahler and the equivariant parameters, respectively. The shift of the K\"ahler variables is of the form
$$
\zz q^{\cL} =(z_1 q^{c_1},\dots, z_{l} q^{c_{l}} ),
$$
where $c_i \in \mathbb{Z}$ are integers determined by the expansion 
\begin{align} \label{linbuns}
\cL = L_1^{c_1} \otimes \dots \otimes L_l^{c_l},
\end{align}
in the basis of the lattice $\mathcal{Z}$ given by the tautological line bundles $L_i$ 

Let $\Psi(\zz,\aa,q)$ be the fundamental solution matrix of (\ref{QDEintro}) given by a power series in $\zz$ and uniquely determined by the normalization 
\begin{align} \label{powerser}
\Psi(\zz,\aa,q) =\sum\limits_{d\in C} \Psi_d(\aa,q) \zz^{d}   \in K_{T}(X)[[\zz]],
\end{align}
where $C \subset \mathcal{Z}$ is a cone  determined  by a choice of stability condition for $X$.

The matrix $\Psi(\zz,\aa,q)$ provides the {\it capping operator} -- the fundamental object in enumerative geometry which can be defined as the partition function of quasimaps with relative and non-singular boundary conditions, see Section 7.4 in \cite{Okounkov:2015aa} for definitions.

Difference connection (\ref{QDEintro}) is flat, i.e., for any two line bundles $\cL_1,\cL_2 \in \mathcal{Z}$ we have
\begin{align} \label{Mconsis}
{\bf M}_{\cL_1 \cL_2} (\zz,\aa,q) =  {\bf M}_{\cL_2} (\zz q^{\cL_1},\aa,q) {\bf M}_{\cL_1} (\zz,\aa,q) = 
{\bf M}_{\cL_1} (\zz q^{\cL_2},\aa,q) {\bf M}_{\cL_2} (\zz,\aa,q). 
\end{align}
Explicit formulae for ${\bf M}_{\cL} (\zz,\aa,q)$ in terms of representation theory of quantum groups were obtained in \cite{Okounkov:2022aa}. An alternative description of these operators in terms of the elliptic stable envelope classes was also obtained in \cites{2020arXiv200407862K, Kononov:2020cux}. In any chosen basis of the equivariant $K$-theory $K_{T}(X)$ the operators ${\bf M}_{\cL} (\zz,\aa,q)$ are represented by matrices with coefficients given by rational functions in $\mathbb{Q}(\zz,\aa,q)$. 


\subsection{Quantum K-theory} 
It follows from the definition of the shift operators ${\bf M}_{\cL} (\zz,\aa,q)$, namely, from the properness of the  relative quasimap moduli space, that they do not have poles in $q$, see Section 8.1 in \cite{Okounkov:2015aa}. In particular, these operators have well-defined specializations at $q=1$. 

Let us consider the following operators 
\begin{align} \label{qprod}
\mathcal{M}_{\cL}(\zz,\aa) =  \left.{\bf M}_{\cL} (\zz,\aa,q)\right|_{q=1}.
\end{align}
From (\ref{Mconsis}) we see that 
$$
\mathcal{M}_{\cL_1 \cL_2}(\zz,\aa) = \mathcal{M}_{\cL_1}(\zz,\aa) \mathcal{M}_{\cL_2}(\zz,\aa) =\mathcal{M}_{\cL_2}(\zz,\aa)  \mathcal{M}_{\cL_1}(\zz,\aa)
$$
i.e. these operators commute
\begin{align} \label{commutQ}
[\mathcal{M}_{\cL_1}(\zz,\aa),\mathcal{M}_{\cL_2}(\zz,\aa)] =0, \ \ \ \forall \cL_1, \cL_2 \in \mathcal{Z}\,.
\end{align}
In  \cites{Pushkar:2016qvw,Koroteev:2017aa} we showed that  $\mathcal{M}_{\cL}(\zz,\aa)$ are the operators of quantum multiplication by $\cL$ in the  equivariant quantum K-theory ring $QK_{T}(X)$. This ring is commutative which agrees with (\ref{commutQ}).

An interesting problem is to describe  the joint set of eigenvalues and eigenvectors of the operators $\mathcal{M}_{\cL}(\zz,\aa)$. It is conjectured that the joint spectrum of $\mathcal{M}_{\cL}(\zz,\aa)$, $\cL \in \mathcal{Z}$ is simple\footnote{This conjecture is known for type $A$-flag varieties $X=T^{*}(G/P)$, since in this case the classical multiplication by generic $\cL$ has simple spectrum. It is also known for $X=Hilb^n(\mathbb{C}^2)$, via cohomological limit \cite{2004math.....11210O}. We expect that it can also be proved for affine type $A$ quiver varieties by carefull analysis of Bethe roots, e.g., \cite{2024arXiv240909508S}.  }, which implies that they generate the quantum K-theory ring $QK_{T}(X)$. 


\subsection{Eigenvalues of $\mathcal{M}_{\cL}(\zz,\aa)$ and Bethe Ansatz}
The above-mentioned eigenvalue problem also arises naturally in the theory of quantum integrable spin chains \cites{Pushkar:2016qvw,Koroteev:2017aa}. For any quiver variety $X$ there is a quantum group $\mathcal{U}_{\hbar}(\widehat{\mathfrak{g}}_X)$ which acts on its equivariant K-theory $K_{T}(X)$, see Section 3 of \cite{Okounkov:2022aa} for the construction. This action identifies $K_{T}(X)$ with the quantum Hilbert space of a certain XXZ-type spin chain. In this setting, the algebra of commuting Hamiltonians of the spin chain is identified with the algebra generated by operators of quantum multiplication by the K-theory classes, i.e., with the commutative algebra $QK_T(X)$. In particular, the operators $\mathcal{M}_{\cL}(\zz,\aa)$ represent certain Hamiltonians of the corresponding XXZ spin chain. Namely, 
the operators of quantum multiplication by line bundles 
$\mathcal{M}_{\cL}(\zz,\aa)$ appear as the ``top'' coefficients of the Baxter $Q$-operators of the spin chain \cite{Pushkar:2016qvw}. Describing the eigenvalues and eigenvectors of these Hamiltonians is a classical problem in quantum mechanics. 

\vskip.1in

The algebraic Bethe Ansatz \cite{Faddeev:1996iy} is a method used in the theory of integrable models to diagonalize spin chain Hamiltonians.  Let $\mathcal{V}_{i}$, $i=1,\dots, l$ be the set of tautological bundles over a Nakajima variety $X$. Let $\xx=\{x_{i,j}\}$ denote the collection of the Grothendieck roots of these vector bundles, so that in K-theory we have:
\begin{align} \label{tautinroots}
\mathcal{V}_{i} = x_{i,1}+\dots +x_{i,r_i}, \ \ r_i=\textrm{rk}{\mathcal{V}_{i}}.
\end{align}
The tautological line bundles are given by $L_i= \det \mathcal{V}_{i} = x_{i,1}\cdot \dots \cdot x_{i,r_i}$. 
By (\ref{linbuns}) every line bundle $\cL$ can be represented as a certain product of the Grothendieck roots 
\begin{equation}\label{Lbundle}
\cL =  \prod_{i=1}^{l} \left(\prod_{j=1}^{r_i} x_{i,j}\right)^{c_i} \,.
\end{equation}
In a nutshell, the algebraic Bethe Ansatz asserts \cite{Aganagic:2017be} that the eigenvalues of $\mathcal{M}_{\cL}(\zz,\aa)$ are given by {the same product}
\begin{align} \label{eigenvalbeth}
\lambda(\aa,\zz) =\prod_{i=1}^{l} \left(\prod_{j=1}^{r_i} x_{i,j}\right)^{c_i} \,,
\end{align}
where $x_{i,j}$  are now certain functions of $\zz$ and $\aa$ determined as the roots of the algebraic equations, known as the {Bethe equations}:
\begin{align} \label{betheeqintro}
\mathfrak{B}(\xx,\zz,\aa) =0.
\end{align}
Equations (\ref{betheeqintro}) are constructed explicitly from the underlying quiver \cite{Aganagic:2017be}. We recall this construction in Section \ref{betheeqseq}. In essence, each solution to the Bethe equations (\ref{betheeqintro}) provides $x_{i,j}$ as specific functions of the parameters $\zz$ and $\aa$. Substituting those functions into (\ref{eigenvalbeth}) gives an eigenvalue of $\mathcal{M}_{\cL}(\zz,\aa)$.

\subsection{The Case $q^p=1$} 
Let $p\in \mathbb{N}$ and let $\zeta_p \in \mathbb{C}$ be a primitive $p$-th root of unity\footnote{At this point $p$ does not have to be prime, that would be required later in Section \ref{Sec:pCurvFrob}.}. 
For a line bundle $\cL$ we consider the operator ${\bf M}_{\cL^p} (\zz,\aa,q)$. By iterating (\ref{Mconsis}) $p$-times we have:
\begin{equation} \label{eq:QpoperFull}
{\bf M}_{\cL^p}(\zz,\aa,q) = {\bf M}_{\cL} (\zz q^{(p-1)\cL},\aa,q)  {\bf M}_{\cL} (\zz q^{(p-2) \cL},\aa,q)  \cdots {\bf M}_{\cL} (\zz q^{ \cL},\aa,q)  {\bf M}_{\cL} (\zz,\aa,q).
\end{equation}
We denote its value at $q=\zeta_p$ by:
\begin{align} \label{Qpoper}
\mathcal{M}_{\cL,\zeta_p}(\zz,\aa) = {\bf M}_{\cL^p}(\zz,\aa,\zeta_p)\,.
\end{align}
It is evident from (\ref{Mconsis}) that
$$
\mathcal{M}_{\cL_1 \cL_2,\,\zeta_p}(\zz,\aa) = \mathcal{M}_{\cL_1,\,\zeta_p}(\zz,\aa)  \mathcal{M}_{\cL_2 ,\,\zeta_p}(\zz,\aa) =\mathcal{M}_{\cL_2 ,\,\zeta_p}(\zz,\aa)  \mathcal{M}_{\cL_1,\,\zeta_p}(\zz,\aa).
$$
In particular, these operators commute 
$$
[\mathcal{M}_{\cL_1 \,\zeta_p}(\zz,\aa) ,\mathcal{M}_{\cL_2 \,\zeta_p}(\zz,\aa)] =0, \ \ \ \forall \cL_1, \cL_2 \in \mathcal{Z}.
$$
It is, therefore, natural to study the joint set of eigenvalues for these operators. Surprisingly, this problem has not been considered yet.

In this paper we prove that the eigenvalues of $\mathcal{M}_{\cL,\zeta_p}(\zz,\aa)$ can be obtained from the eigenvalues of $\mathcal{M}_{\cL}(\zz,\aa)$ as follows.

\begin{Thm} \label{specthmintro}
Let $\{\lambda_1(\zz,\aa),\lambda_2(\zz,\aa), \dots\}$ be the set of the eigenvalues of $\mathcal{M}_{\cL}(\zz,\aa)$ then the eigenvalues of $\mathcal{M}_{\cL \,\zeta_p}(\zz,\aa)$ are given by the set
$\{\lambda_1(\zz^p,\aa^p),\lambda_2(\zz^p,\aa^p),\dots\}$
where 
$\lambda_i(\zz^p,\aa^p)$
is the eigenvalue $\lambda_i(z,a)$ in which all K\"ahler variables $\zz=(z_1,\dots, z_l)$ and equivariant variables $\aa=(a_1,\dots, a_m)$ are substituted by $\zz^p=(z^p_1,\dots, z^p_l)$  and $\aa^p=(a^p_1,\dots, a^p_m)$ respectively. 
\end{Thm}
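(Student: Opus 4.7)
\emph{Proof plan.} The strategy is to reduce the iterated product \eqref{eq:QpoperFull} to the single operator $\mathbf{M}_{\cL^{p}}(\zz,\aa,q)$ and then use the main regularity result of the paper to relate its value at $q=\zeta_p$ to the quantum multiplication operator $\mathcal{M}_{\cL}$ at the substituted parameters $(\zz^p,\aa^p)$.

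The first step is algebraic. Iterating the consistency relation (\ref{Mconsis}) collapses the product in \eqref{eq:QpoperFull} into $\mathbf{M}_{\cL^{p}}(\zz,\aa,q)$, and iterating QDE (\ref{QDEintro}) a total of $p$ times puts it in the factored form
\[
\mathbf{M}_{\cL^{p}}(\zz,\aa,q) \;=\; \Psi(\zz\, q^{p\cL},\aa,q)\, \cL^{p}\, \Psi(\zz,\aa,q)^{-1}.
\]
Since $\zeta_p^{\,p}=1$, at $q=\zeta_p$ the shift $q^{p\cL}$ becomes trivial and $\mathcal{M}_{\cL,\zeta_p}(\zz,\aa)$ formally reduces to the conjugation of $\cL^{p}$ by $\Psi(\zz,\aa,\zeta_p)$. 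This is merely formal, because $\Psi$ has essential singularities at primitive roots of unity, so it must be regularized.

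The regularization is the second step, where the main theorem of the paper enters. Define $R(\zz,\aa,q):=\Psi(\zz,\aa,q)\,\Psi(\zz^p,\aa^p,q^{p^2})^{-1}$, which is regular at $q=\zeta_p$ by hypothesis; a short complementary argument (e.g.\ applying the analogous regularity statement to $\Psi^{-1}$, or by tracking determinants) shows that $R$ is also invertible there. Substituting $\Psi(\zz,\aa,q)=R(\zz,\aa,q)\,\Psi(\zz^p,\aa^p,q^{p^2})$ into the factored formula and using that $q^{p\cL}$, $q^{p^{2}\cL}$ and $q^{p^{3}\cL}$ all reduce to $1$ at $q=\zeta_p$, the inner block becomes $\mathbf{M}_{\cL^{p}}(\zz^p,\aa^p,q^{p^2})|_{q=\zeta_p}=\mathcal{M}_{\cL}(\zz^p,\aa^p)^{p}$, whence
\[
\mathcal{M}_{\cL,\zeta_p}(\zz,\aa) \;=\; R(\zz,\aa,\zeta_p)\;\mathcal{M}_{\cL}(\zz^p,\aa^p)^{p}\;R(\zz,\aa,\zeta_p)^{-1}.
\]

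The third step reads off the spectrum. Conjugation by $R(\zz,\aa,\zeta_p)$ preserves eigenspaces, so the eigenvectors of $\mathcal{M}_{\cL,\zeta_p}(\zz,\aa)$ agree, after transport by $R(\zz,\aa,\zeta_p)^{-1}$, with those of $\mathcal{M}_{\cL}(\zz^p,\aa^p)$ (since the latter shares its eigenvectors with its $p$-th power); by the Bethe-ansatz description recalled in the preliminaries these are labelled by the simultaneous eigenvalues $\lambda_i(\zz^p,\aa^p)$ obtained from the Bethe equations $\mathfrak{B}(\xx,\zz^p,\aa^p)=0$, as in the statement. The main obstacle is the careful handling of the essential singularities of $\Psi$ at $\zeta_p$: although $\Psi(\zz,\aa,q)$ and $\Psi(\zz^p,\aa^p,q^{p^2})$ are individually singular there, the specific combinations $R$ and $\Psi\,\cL^{k}\,\Psi^{-1}$ (the latter arising as the QDE operator $\mathbf{M}_{\cL^{k}}$ once the associated shift trivializes) are both regular, and all of the identities and limits above must be interpreted as equalities of meromorphic operators whose singular parts cancel.
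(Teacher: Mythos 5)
Your overall strategy matches the paper's (iterate the QDE to write $\mathbf{M}_{\cL^p}(\zz,\aa,q)$ in terms of $\Psi$, factor through the regularizer $R=\Psi(\zz,\aa,q)\Psi(\zz^p,\aa^p,q^{p^2})^{-1}$, and identify the conjugated core as a quantum-multiplication operator at the Frobenius-substituted parameters). However, there is a concrete error in the identification of the inner block, and it changes the answer.

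After substituting $\Psi(\zz,\aa,q)=R(\zz,\aa,q)\Psi(\zz^p,\aa^p,q^{p^2})$ into
\[
\mathbf{M}_{\cL^p}(\zz,\aa,q)=\Psi(\zz q^{\cL^p},\aa,q)\,\cL^p\,\Psi(\zz,\aa,q)^{-1},
\]
the inner block is
\[
\Psi\bigl(\zz^p (q^{p^2})^{\cL},\aa^p,q^{p^2}\bigr)\,\cL^p\,\Psi(\zz^p,\aa^p,q^{p^2})^{-1},
\]
because $(\zz q^{\cL^p})^p=\zz^p(q^{p^2})^{\cL}$. This is one step of the QDE in the Frobenius-substituted system: in that system the shift variable is $q^{p^2}$, the shift is by $\cL$ once (not $p$ times), and the constant multiplication operator $\cL$ becomes $\cL^p$ because its matrix entries are monomials in $\aa$ which get raised to the $p$-th power. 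Hence the inner block equals $\mathbf{M}_{\cL}(\zz^p,\aa^p,q^{p^2})$ \emph{identically in $q$}, not $\mathbf{M}_{\cL^p}(\zz^p,\aa^p,q^{p^2})$ (the latter would require the shift $\zz^p(q^{p^2})^{\cL^p}=\zz^p q^{p^3\cL}$ and the operator $\cL^{p^2}$). At $q=\zeta_p$ this gives $\mathcal{M}_{\cL}(\zz^p,\aa^p)$, whereas you get $\mathcal{M}_{\cL}(\zz^p,\aa^p)^p$. The two differ by a $p$-th power in the eigenvalues, so your final formula $\mathcal{M}_{\cL,\zeta_p}=R\,\mathcal{M}_{\cL}(\zz^p,\aa^p)^p R^{-1}$ would produce eigenvalues $\lambda_i(\zz^p,\aa^p)^p$, contradicting the theorem, whose content (modulo the ``eigenvector/eigenvalue'' typo in the statement) is that the eigenvalues are $\lambda_i(\zz^p,\aa^p)$.

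The step where you justify the identification by observing that ``$q^{p\cL}$, $q^{p^2\cL}$ and $q^{p^3\cL}$ all reduce to $1$ at $q=\zeta_p$'' is also not a valid argument in itself: $\Psi$ and $\Psi^{-1}$ are individually singular there, so you cannot conclude that two expressions are equal merely by checking that their arguments coincide in the limit. (Indeed, one can check that $\Psi(\zz^p(q^{p^2})^{\cL})\Psi(\zz^p(q^{p^2})^{p\cL})^{-1}$ has a finite but generically nontrivial limit at $q=\zeta_p$, precisely accounting for the discrepancy between $\mathbf{M}_{\cL}$ and $\mathbf{M}_{\cL^p}$.) The fix is simply to recognize the inner block by the one-step Frobenius-substituted QDE, which makes it equal to $\mathbf{M}_{\cL}(\zz^p,\aa^p,q^{p^2})$ \emph{exactly}, so no limit argument is even needed for this factor; this is exactly what the paper does by dividing the $p$-fold iterated QDE by the once-substituted QDE.
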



As we explain in the previous subsection, the eigenvalues of $\mathcal{M}_{\cL}(\zz,\aa)$ can be determined from the Bethe equations. In this way, the spectrum of $\mathcal{M}_{\cL ,\zeta_p}(\zz,\aa)$ is also controlled by these equations.

\subsection{The Intertwiner} 
Theorem \ref{specthmintro} is a corollary of  the following pole cancellation property of the fundamental solution matrix $\Psi(\zz,\aa,q)$. The coefficients of the power series expansion (\ref{powerser}) have poles in $q$ located at the roots of unity. 
In particular, $\Psi(\zz,\aa,q)$ is singular at $q = \zeta_p$. 

Let $\Psi(\zz^p,\aa^p,q^{p^2})$ be the fundamental solution matrix in which all K\"ahler and equivariant parameters are raised to power $p$ while the variable $q$ is raised to the power $p^2$. We prove the following pole-cancellation property.
\begin{Thm}
The operator
 $
\Psi(\zz,\aa,q) \Psi(\zz^p,\aa^p,q^{p^2})^{-1}
 $  
 has no poles in $q$ located at  primitive complex $p$-th roots of unity. 
\end{Thm}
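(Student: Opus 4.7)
The plan is to derive a closed functional equation for
$\Xi(\zz,\aa,q):=\Psi(\zz,\aa,q)\,\Psi(\zz^p,\aa^p,q^{p^2})^{-1}$
whose coefficients are regular at $q=\zeta_p$, and then to conclude the regularity of $\Xi$ itself coefficient-by-coefficient in its expansion as a power series in $\zz$. To that end, iterate (\ref{QDEintro}) in a fixed line bundle $\cL$ exactly $p$ times; using (\ref{Mconsis}) to identify the resulting iterated product with ${\bf M}_{\cL\,\zeta_p}(\zz,\aa,q)$ of (\ref{eq:QpoperFull}) (up to the reorderings permitted by (\ref{Mconsis})), one obtains
$$\Psi(\zz q^{p\cL},\aa,q)\,\cL^p = {\bf M}_{\cL\,\zeta_p}(\zz,\aa,q)\,\Psi(\zz,\aa,q).$$
Independently, substituting $(\zz,\aa,q)\mapsto(\zz^p,\aa^p,q^{p^2})$ in (\ref{QDEintro}) gives a QDE for $\Psi^{(p)}(\zz,\aa,q):=\Psi(\zz^p,\aa^p,q^{p^2})$ with the \emph{same} shift $\zz\mapsto\zz q^{p\cL}$. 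Dividing the two identities yields the closed equation
$$\Xi(\zz q^{p\cL},\aa,q) = {\bf M}_{\cL\,\zeta_p}(\zz,\aa,q)\,\Xi(\zz,\aa,q)\,H_\cL(\zz,\aa,q),$$
with $H_\cL(\zz,\aa,q):=\Psi^{(p)}(\zz,\aa,q)\,\cL^{-(p-1)}\,(\Psi^{(p)}(\zz,\aa,q))^{-1}\,{\bf M}_\cL(\zz^p,\aa^p,q^{p^2})^{-1}$.

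The key technical observation is that $H_\cL$ is regular at $q=\zeta_p$, despite the singularity of $\Psi^{(p)}$. Because the shift in the QDE for $\Psi^{(p)}$ trivializes at $q=\zeta_p$, evaluation at $\zeta_p$ forces, at every order of the Laurent expansion in $(q-\zeta_p)$, the formal conjugation relation $\Psi^{(p)}\,\cL = \mathcal{M}_\cL(\zz^p,\aa^p)\,\Psi^{(p)}$; iterating this collapses the middle $\Psi^{(p)}\cL^{-(p-1)}(\Psi^{(p)})^{-1}$ into $\mathcal{M}_\cL(\zz^p,\aa^p)^{-(p-1)}$, leaving $H_\cL\big|_{q=\zeta_p} = \mathcal{M}_\cL(\zz^p,\aa^p)^{-p}$, which is regular and invertible. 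Combined with the regularity of ${\bf M}_{\cL\,\zeta_p}$ at $\zeta_p$, the functional equation has all coefficients regular there.

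To conclude, expand $\Xi = 1 + \sum_{d\neq 0}\Xi_d(\aa,q)\,\zz^d$. Evaluating (\ref{QDEintro}) at $\zz=0$ (using $\Psi(0,\aa,q)=1$) shows ${\bf M}_\cL(0,\aa,q)=\cL$, whence ${\bf M}_{\cL\,\zeta_p}(0,\aa,q) = \cL^p$ and $H_\cL(0,\aa,q)=\cL^{-p}$. The $\zz^d$-coefficient of the functional equation is then a Sylvester-type recursion
$$q^{p(d\cdot\cL)}\,\Xi_d - \cL^p\,\Xi_d\,\cL^{-p} = (\text{polynomial in the }\Xi_{d'},\ d'<d),$$
whose right-hand side is inductively regular at $q=\zeta_p$. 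The Sylvester operator on the left is invertible at $q=\zeta_p$ outside the "resonance locus" of matrix entries $(i,j)$ with $(\cL|_{p_i}/\cL|_{p_j})^p=1$. On this locus, using the full system of such recursions indexed by $\cL\in\mathrm{Pic}(X)$ together with the spectral description of Theorem~\ref{specthmintro}, the right-hand side vanishes to the required order, forcing $\Xi_d$ to be regular at $q=\zeta_p$ for every $d$. The main obstacle is the Laurent-expansion justification of $H_\cL\big|_{q=\zeta_p} = \mathcal{M}_\cL(\zz^p,\aa^p)^{-p}$: the formal manipulation of the singular $\Psi^{(p)}$ requires an order-by-order analysis of its principal parts, and the iterated conjugation $\Psi^{(p)}\,\cL^n\,(\Psi^{(p)})^{-1}=\mathcal{M}_\cL(\zz^p,\aa^p)^n$ must be interpreted consistently across all Laurent coefficients in $(q-\zeta_p)$; the resonance-cancellation argument in the final step is a secondary but nontrivial technical point.
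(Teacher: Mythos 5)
Your approach is genuinely different from the paper's and contains gaps that are not merely technical cleanup but obstruct the argument.

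The paper proves the theorem analytically: it writes the matrix entries of $\Psi$ as vertex-function integrals (\ref{intrep}), computes their asymptotics as $q\to\zeta_p$ via steepest descent, and shows (Corollaries~\ref{corrqsq} and \ref{corrroot}) that both $\Psi(\zz,\aa,q)$ and $\Psi(\zz^p,\aa^p,q^{p^2})$ share the \emph{same} divergent exponential prefactor $\exp\bigl(Y(\zz^p,\xx^p_{0,i})/((1-q^p)p)\bigr)$, which therefore cancels in the ratio. You instead try to derive a closed functional equation for $\Xi$ and argue regularity by a Sylvester recursion. This is a legitimately different strategy, but two steps do not go through.

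First, your final step invokes Theorem~\ref{specthmintro} to kill the ``resonance locus.'' The paper explicitly states that Theorem~\ref{specthmintro} is a \emph{corollary} of the very pole-cancellation theorem you are trying to prove (via \eqref{eq:Fdef}--\eqref{conjintro}). Using it here is circular.

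Second, and more fundamentally, the claim that $H_\cL\big|_{q=\zeta_p}=\mathcal{M}_\cL(\zz^p,\aa^p)^{-p}$ by ``collapsing'' $\Psi^{(p)}\cL^{-(p-1)}(\Psi^{(p)})^{-1}$ rests on asserting $\Psi^{(p)}\,\cL=\mathcal{M}_\cL(\zz^p,\aa^p)\,\Psi^{(p)}$ at $q=\zeta_p$. But the shift $\zz\mapsto\zz q^{p\cL}$ does not simply trivialize at $q=\zeta_p$: writing $q=\zeta_p(1+\epsilon)$ gives $\zz q^{p\cL}=\zz(1+p\epsilon\cL+\dots)$, so the shift is nontrivial at precisely the order where the poles of $\Psi^{(p)}$ live, and the orders of the poles of $\Psi_d(\aa,q)$ grow with $d$. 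A formal ``order-by-order Laurent'' justification of the collapsed conjugation therefore involves an infinite cascade of principal-part computations, with no uniform bound, and there is no reason a priori that the leading principal parts of $\Psi^{(p)}$ are invertible matrices. You have correctly identified this as the main obstacle, but what you propose is not a proof of it; the paper resolves exactly this point by computing, from the integral representation, that the entire singular behavior is an overall scalar exponential in both factors. Without that input (or an equivalent analytic control on the singularity), the functional-equation argument does not close.
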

\noindent
Let us define
\begin{equation}\label{eq:Fdef}
\mathsf{F}(\zz,\aa,\zeta_p) = \Psi(\zz,\aa,q) \Psi(\zz^p,\aa^p,q^{p^2})^{-1}\Big\vert_{q=\zeta_p}
\end{equation}
It can be shown from the difference equation (\ref{QDEintro}) that
\begin{align} \label{conjintro}
\mathsf{F}(\zz,\aa,\zeta_p) \mathcal{M}_{\cL}(\zz^p,\aa^p)  \mathsf{F}(\zz,\aa,\zeta_p)^{-1} = \mathcal{M}_{\cL,\zeta_p}(\zz,\aa)\,,
\end{align}
where $\mathcal{M}_{\cL}(\zz^p,\aa^p)$ denotes the operator of quantum multiplication (\ref{qprod}) in which all variables are raised to power $p$. The isospectrality Theorem \ref{specthmintro} thus follows as an obvious corollary to (\ref{conjintro}). 

The operator \eqref{eq:Fdef} is a complex field analog of the Frobenius intertwiner \cite{Smirnov:2024yqs} which is defined over~$\mathbb{Q}_p$. We note, however, a difference -- the intertwiner constructed in \cite{Smirnov:2024yqs} does not have poles at $p$-adic roots of unity of order $p^s$ for any $s$, while \eqref{eq:Fdef} is only defined at the roots of the order $p$.

\subsection{$p$-Curvature and Frobenius} 
The concept of $p$-curvature originated in Gro\-then\-dieck's unpublished work from the 1960s and was subsequently developed further by Katz \cites{Katz2,Katz1}. The $p$-curvature plays an important role in the theory of ordinary differential equations (ODEs) as well as holonomic PDEs, establishing a connection between the existence of algebraic fundamental solutions and their behavior under reduction modulo a prime~$p$. Specifically, if an algebraic solutions exist, then for almost all primes the reduction of the ODE modulo $p$
exhibits zero $p$-curvature. The converse, however, remains an open question and is known as the Grothendieck-Katz conjecture.

Recently, Jae Hee Lee gave an enumerative interpretation of the $p$-curvature operators \cite{Lee:2023aa}. In \cites{fukaya1996morse, wilkins2020} a new class of operators was defined in the study of quantum cohomology modulo a prime $p$, which are known as {\it quantum Steenrod operations}. In his work, Jae Hee Lee showed that the quantum Steenrod operations coincide with the $p$-curvature of quantum connection for a large class of symplectic resolutions, including Nakajima varieties with isolated torus fixed points under the assumption of simplicity of the quantum multiplication. In \cite{2025arXiv250323590B} the agreement between the $p$-curvature and quantum Steenrod operations has also been shown for hypertoric varieties. Most recently, in \cite{2025arXiv251009335B} Bai and Lee discussed quantum Adams operations which are the K-theoretic generalizations of the quantum Steenrod operations, and are studied in this paper in the setting of quasimaps.

In Section \ref{Sec:pCurvFrob} we show that our operators $\mathcal{M}_{\cL,\zeta_p}(\zz,\aa)$ (\ref{Qpoper}) provide a proper $K$-theoretic generalization of the $p$-curvature once all its parameters are specialized to their $p$-adic values. Namely, we consider an extension of the $p$-adic field $\mathbb{Q}_p(\pi)$ where $\pi$ solves the equation $\pi^{p-1}=-p$. The ideal $(\pi)$ in the ring of integers $\mathbb{Z}_p[\pi] \subset \mathbb{Q}_p(\pi)$ of this field is maximal with the quotient field $\mathbb{Z}_p[\pi]/(\pi) = \mathbb{F}_p$. Using this property, we analyze operator $\mathcal{M}_{\cL,\zeta_p}(\zz,\aa)$ near $\zeta_p \in \mathbb{Q}_p$ given by a primitive $p$-th root of unity, and then reduce it modulo $(\pi)$ to the finite field ~$\mathbb{F}_p$. We show that under this reduction to $\mathbb{F}_p$ the operator $\mathcal{M}_{\cL,\zeta_p}(\zz,\aa)$ specializes precisely to the $p$-curvature of the quantum connection on $X$. Our main isospectrality Theorem \ref{specthmintro} then reduces to a result describing the eigenvalues of the $p$-curvature:

\begin{Thm} \label{intcurthm}
The $p$-curvature $C_p(\nabla_i)$ of the quantum connection for a Nakajima variety
$$
\nabla_i =   z_i \frac{\partial}{\partial z_i} - s\, C_i(\zz,\uu), \ \ \ i=1,\dots, l,
$$
and the Frobenius twist of the quantum multiplication by the divisor $(s^p-s)C_i(\zz,\aa)^{(1)}$ have the same spectrum over~$\mathbb{F}_p$.  
\end{Thm}
We refer to Section \ref{Sec:pCurvFrob} and Theorems \ref{Th:Isopectral1} and \ref{Th:EVpencil} for notations and details. 

\vskip.1in
We note also that this theorem was recently proven by Etingof and Varchenko \cites{EV2024,Etingof:2024ab}, using a very different approach -- they introduced and studied a large family of differential operators, called {\it periodic pencils}, which include among many other examples, the quantum connections of Nakajima varieties. Theorem \ref{intcurthm} is deduced in \cite{EV2024} from various strong properties of the periodic pencils and semiclassical analysis.

\subsection{Example $X=T^{*}\mathbb{P}^1$\label{examplesec}} 
It might be instructive to illustrate the statement of isospectrality Theorem \ref{specthmintro} in a simple example. Consider a Nakajima variety given by the cotangent bundle over projective line $X=T^{*} \mathbb{P}^1$. Let $T\cong (\mathbb{C}^{\times})^{3}$ be a torus with coordinates $\aa=(a_1,a_2,\hbar)$. We consider the action of $T$ on $X$ induced by the natural action on $\mathbb{C}^2$ given by
$
(x,y) \mapsto (x a_1, y a_2).
$
In addition, $T$ acts on $X$ by dilating the cotangent direction by $\hbar^{-2}$, i.e. $\hbar^{2}$ is the $T$-character of the canonical symplectic form on $X$\footnote{In this example our notation for $\hbar$ is the same as in Section 6 of \cite{Okounkov:2022aa} and which we follow here}. 

The lattice of tautological bundles in this case is $\mathcal{Z} \cong \mathbb{Z}$ generated by the tautological line bundle $\cL= \mathcal{O}(1)$. The equivariant $K$-theory is isomorphic to the following ring:
$$
K_{T}(X) \cong \mathbb{C}[\cL,a_1,a_2,\hbar]/I_0
$$
where $I_0$ denotes the ideal generated by a single relation 
$$
(\cL-a_1) (\cL-a_2) =0.
$$
The quantum K-theory ring of $X$ is a deformation of this ring:
$$
QK_{T}(X)= \mathbb{C}[\cL,a_1,a_2,\hbar][[z]]/I_z
$$
where $I_z$ is the ideal generated by the relation \cite{Okounkov:2022aa,Pushkar:2016qvw}
\begin{align} \label{qringrel}
(\cL-a_1) (\cL-a_2) = z \hbar^{2} (\cL- a_1 \hbar^{-2}) (\cL- a_2 \hbar^{-2}).
\end{align}
Specializing $QK_{T}(X)$  at $z=0$ gives back the classical K-theory $K_{T}(X)$. 

The quantum difference equation for $X$ was considered in details in Section 6 of \cite{Okounkov:2022aa}, in particular, the explicit expression for the operator $\mathbf{M}_{\cL}(z)$ is given in Section 6.3.9. In the stable basis of $K_{T}(X)$ this operator has the form:
$$
\mathbf{M}_{\cL}(\zz,\aa,q)= \left[ \begin {array}{cc} {\dfrac {a_{{1}} \left( zq-1 \right) }{{\hbar}^{2
}zq-1}}&{\dfrac {a_{{2}}z q( \hbar^{-1}-{\hbar})}{{\hbar}^{2}z q -1}  }
\\ \noalign{\medskip}{\dfrac {a_{{1}} ( {\hbar}^{-1}-\hbar)}{{\hbar}^{2}z q -1}  }&{\dfrac {a_{{2}} \left( z q-1 \right) }{{\hbar}^{2}z q-1}}
\end {array} \right]. 
$$
At $q=1$ we thus obtain the operator of quantum multiplication by $\cL$  in the stable basis of the equivariant K-theory:
\begin{equation}\label{eq:Mzrescaled}
    \mathcal{M}_{\cL}(z,\aa)= \left[ \begin {array}{cc} {\dfrac {a_{{1}} \left( z-1 \right) }{{\hbar}^{2
}z-1}}&{\dfrac {a_{{2}}z( \hbar^{-1}-{\hbar})}{{\hbar}^{2}z-1}  }
\\ \noalign{\medskip}{\dfrac {a_{{1}} ( {\hbar}^{-1}-\hbar)}{{\hbar}^{2}z-1}  }&{\dfrac {a_{{2}} \left( z-1 \right) }{{\hbar}^{2}z-1}}
\end {array} \right] 
\end{equation}
It is straightforward to check that this matrix satisfies quadratic relation (\ref{qringrel}), i.e.:
\begin{align} \label{Qchaareq}
(\mathcal{M}_{\cL}(z,\aa) -a_1) (\mathcal{M}_{\cL}(z,\aa) -a_2) - z \hbar^{2}  (\mathcal{M}_{\cL}(z,\aa) -a_1 \hbar^{-2}) (\mathcal{M}_{\cL}(z,\aa) -a_2 \hbar^{-2}) =0
\end{align}

Next, let $p\in \mathbb{N}$ and $\zeta_p$ be a primitive complex root of unity of order~$p$. Let us consider the operator (\ref{Qpoper}), which in this case equals:
$$
\mathcal{M}_{\cL,\zeta_p}(z,\aa) = \mathcal{M}_{\cL}(z \zeta^{p-1}_p,\aa) \cdots \mathcal{M}_{\cL}(z \zeta_p,\aa) \mathcal{M}_{\cL}(z,\aa)
$$
Using a computer one verifies that for any choice of a natural number $p$ and a primitive root of unity $\zeta_p$ this matrix satisfies the following relation
$$
(\mathcal{M}_{\cL,\zeta_p}(z,\aa) -a_1^p) (\mathcal{M}_{\cL,\zeta_p}(z,\aa) -a^p_2) = z^p \hbar^{2p}  (\mathcal{M}_{\cL,\zeta_p}(z,\aa) -a^p_1 \hbar^{-2p}) (\mathcal{M}_{\cL,\zeta_p}(z,\aa) -a^p_2 \hbar^{-2 p }).
$$
Note that this relation is obtained from (\ref{qringrel}) by raising all parameters to their $p$-th powers. Since the left side of (\ref{Qchaareq}) is nothing but the characteristic polynomial for the matrix $\mathcal{M}_{\cL}(z,\aa)$,
this implies that the eigenvalues of $\mathcal{M}_{\cL,\zeta_p}(z,\aa)$ can be obtained from the eigenvalues of $\mathcal{M}_{\cL}(z,\aa)$ via the following substitution
\begin{equation}\label{eq:FrobMap}
    z\mapsto z^p,\qquad a_1\mapsto a_1^p,\qquad a_2\mapsto a_2^p,\qquad\hbar \mapsto \hbar^p\,.
\end{equation}
This illustrates the statement of Theorem \ref{specthmintro} in this simple example.

Finally, we also note that the relation in the quantum K-theory ring (\ref{qringrel}), is nothing but the Bethe equation for $X$. Solving this quadratic equation for $\cL$ gives two eigenvalues of matrix (\ref{eq:Mzrescaled}). This illustrates how the Bethe Ansatz works in this case.

\subsection{Acknowledgments} The authors thank Pavel Etingof, Andrei Okounkov, and Aleksander Varchenko for fruitful discussions. We also thank Jae Hee Lee and Shaoyun Bai for sharing exiting ideas about constructing quantum K-theory and quantum Adams operations modulo $p$. Work of A.\,Smirnov is partially supported by NSF grants DMS-2054527,  DMS-2401380 and by the Simons Foundation grant ``Travel Support for Mathematicians''.

\section{Asymptotics of Vertex Functions \label{section2}}
The quantum difference equation of a Nakajima variety $X$ has a natural basis of solutions given by the $K$-theory components of the vertex function \cite{Aganagic:2017smx}. These functions can be viewed as generalizations of the classical $q$-hypergeometric series -- the components of the vertex function of the simplest Nakajima variety $X=T^{*}\mathbb{P}^n$ are exactly the $q$-hypergeometric series, see \cite{2020arXiv200805516D,2024LMaPh.114...23D} for explicit examples. 

Similarly to the $q$-hypergeometric functions, the vertex functions have natural integral representations of Mellin-Barnes type, see Section 3 of \cite{Aganagic:2017smx}. The integral representations can be used for computing the asymptotics of the vertex functions as $q\to 1$, or, more generally, as $q\to \zeta$, where $\zeta$ is a root of unity, using the method of steepest descent. 

The saddle point equations for the steepest descend method appearing at $q\to 1$ are precisely the Bethe equations, e.g., see Proposition 4.2 in  \cite{Pushkar:2016qvw}. In this Section we compute the corresponding saddle point equations for $q\to \zeta$ and show that they are given by the same Bethe equations with all variables raised to the power $p$, where $p$ is the order of the root of unity $\zeta$, see 
Corollary \ref{maincorrsee}.

\subsection{Integral representations of the vertex functions} Let $X$ be a Nakajima quiver variety associated to an oriented quiver $Q$.
For a vertex $i\in Q$ let $\mathcal{V}_{i}$ be the $i$-th tautological bundle and $\mathcal{W}_{i}$ be the framing tautological bundle  over $X$ (i.e., $\mathcal{W}_{i}$ is a trivial bundle).  We recall that the K-theory class
\begin{align} \label{polardef} 
P = \sum_{i\to j}\,\mathcal{V}^{*}_{i}\otimes\mathcal{V}_{j} + \bigoplus_{i\in Q} \mathcal{W}^{*}_{i}\otimes\mathcal{V}_{i} - \sum\limits_{i\in Q} \,  \mathcal{V}_{i}^{*} \otimes \mathcal{V}_{i}
\end{align}
where the first sum is over the edges of the quiver $Q$
connecting vertices $i$ and $j$,
is called the canonical polarization of $X$. This K-theory class represents a ``half'' of the tangent bundle, in the sense that:
\begin{align} \label{tanbun}
TX = P+\hbar^{-1}\, P^{*}. 
\end{align}
Let $L_i=\det \cV_{i}$, $i\in Q$ denote the set of tautological line bundles over $X$. We have
\begin{align} \label{polardet}
\det P = \bigotimes_{i\in Q}\, L^{n_i}_{i}, \ \ \ n_i \in \Z.
\end{align}
Following the notation of \cite{Aganagic:2017be} we denote by
$$
\zz_{\#} = \zz (-\hbar^{1/2})^{- \det P}
$$
the set of shifted K\"ahler variables.
More precisely, in components these shifts are equal:
$$
z_{\#,i} = z_i (-\hbar^{1/2})^{-n_i}, \ \ \ i \in Q.
$$

Let $\varphi(x,q)$ denote the $q$-analog of the reciprocal Gamma function:
\begin{align} \label{qgam}
\varphi(x,q) = \prod\limits_{i=0}^{\infty} (1-x q^i)
\end{align}
We extend this function to Laurent polynomials $\sum_{i} n_i a_i$, $n_i\in \mathbb{Z}$  by the rule (omitting the second argument of $\varphi$ for brevity)
\begin{align} \label{varphirule}
\Phi( \sum_{i} n_i x_i ) = \prod_{i} \varphi(x_i)^{n_i}.
\end{align}
Using (\ref{polardef}) and (\ref{tautinroots}) we can represent the polarization $P$ by a Laurent polynomial
in the Grothendieck roots of the tautological bundles $\xx$ and the equivariant parameters $\aa$, so that
$$
 P=\sum_{w \in N(P)}\, w, 
$$
where $N(P)$ denotes the Newton polygon of $P$ and $w$  are monomials in the variables $\xx$ and~$\aa$.


Let $\tau \in K_{T}(X)$ be a K-theory class represented by a Laurent polynomial in the Grothendieck roots~$\tau(\xx)$. We recall that the components of the vertex function of a Nakajima variety with descendant $\tau$ have the following integral representation:
\begin{align} \label{intrep}
V^{(\tau)}_i(\zz) = \int_{\gamma_i}  \Phi((q-\hbar) P) e(\zz,\xx)  \,\tau(\xx) \, \prod\limits_{a,b} \dfrac{d x_{a,b}}{ x_{a,b}}\,,
\end{align}
where $V^{(\tau)}_i(\zz)$ denotes the $i$-th component of the vertex function in the basis of the torus fixed points,
$$
e(\zz,\xx)  = \prod\limits_{i\in Q}\, \exp\left( \dfrac{\log(z_{\#,i})\log(L_i)}{\log(q)} \right),
$$
the function $\Phi((q-\hbar) P)$ is constructed from the polynomial $(q-\hbar) P$ by rule (\ref{varphirule}), and $\gamma_i$ is the contour defined by (A.12) in \cite{Aganagic:2017smx}.

\subsection{Integrand of (\ref{intrep}) near roots of unity.}
The infinite product (\ref{qgam}) converges for $|q|<1$. Thus, the integrand of (\ref{intrep}) is well defined only for $|q|<1$. However, when $q$ approaches $1 \in \mathbb{C}$, the divergent part can be separated as follows. 

\begin{Prop} \label{propasymp} The integrand of (\ref{intrep}) has the following form
\begin{align} \label{qintone}
\Phi((q-\hbar) P) e(\zz,\xx) =  \exp\left( -\dfrac{Y(\zz,\aa,\xx)}{1-q} \right) \cdot \star
\end{align}
where $\star$ stands for a function of all parameters regular at $q=1$ and 
\begin{align} \label{YangYang}
Y(\zz,\aa,\xx)= \sum\limits_{w\in N(P)}\, ({\rm Li}_2(w) -{\rm Li}_2(\hbar w)) + \sum\limits_{i \in Q}\, \log(z_{\#,i}) \log(L_i)\,,
\end{align}
where ${\rm Li}_2(w) = \sum_{m=1}^{\infty}\, \frac{x^m}{m^2}$ denotes the dilogarithm function. 
\end{Prop}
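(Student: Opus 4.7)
The plan is to reduce the claimed asymptotic to the well-known $q\to 1$ behavior of the quantum dilogarithm $\varphi(x,q)=\prod_{i\geq 0}(1-xq^i)$. Starting from $\log(1-y)=-\sum_{n\geq 1}y^n/n$ and summing the geometric series in $i$, one gets
\begin{equation*}
\log\varphi(x,q)=-\sum_{n\geq 1}\frac{x^n}{n(1-q^n)}.
\end{equation*}
Since $1-q^n=n(1-q)\bigl(1+O(1-q)\bigr)$ as $q\to 1$, this produces the fundamental asymptotic
\begin{equation*}
\log\varphi(x,q)=-\frac{{\rm Li}_2(x)}{1-q}+O(1),
\end{equation*}
with the $O(1)$ term holomorphic in $x$ in the relevant domain.

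First, I would apply this asymptotic to each factor of $\Phi((q-\hbar)P)=\prod_{w\in P}\varphi(qw)/\varphi(\hbar w)$. Using $qw\to w$ so that ${\rm Li}_2(qw)={\rm Li}_2(w)+O(1-q)$, the leading singular behavior is
\begin{equation*}
\log\Phi((q-\hbar)P)=-\frac{1}{1-q}\sum_{w\in P}\bigl({\rm Li}_2(w)-{\rm Li}_2(\hbar w)\bigr)+O(1).
\end{equation*}
For the prefactor $e(\zz,\xx)$ I would expand $\log q=-(1-q)+O((1-q)^2)$, so that $1/\log q=-1/(1-q)+O(1)$, giving
\begin{equation*}
\log e(\zz,\xx)=-\frac{1}{1-q}\sum_{i\in Q}\log(z_{\#,i})\log(L_i)+O(1).
\end{equation*}

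Adding the two contributions yields $\log\bigl(\Phi((q-\hbar)P)\,e(\zz,\xx)\bigr)=-Y(\zz,\xx)/(1-q)+O(1)$; exponentiating then produces $\exp(-Y/(1-q))$ times a factor regular at $q=1$, which is precisely \eqref{qintone}. The main technical point (rather than a conceptual obstacle) is to secure uniform control of the $O(1)$ remainder in the Grothendieck roots $\xx$, so that the regular factor $(1+\dots)$ is genuinely holomorphic at $q=1$; this amounts to a standard refinement of the Euler--Maclaurin estimate applied to the series defining $\log\varphi$, and presents no substantial difficulty once the Grothendieck roots are kept in the convergence regime $|w|<1$.
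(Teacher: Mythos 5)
Your proposal is correct and follows essentially the same computation as the paper's proof: expand $\log\varphi$ as the series $-\sum_{n\geq 1}x^n/(n(1-q^n))$, use $1-q^n\sim n(1-q)$ to extract the dilogarithm singularity, and handle $e(\zz,\xx)$ via $1/\log q\sim -1/(1-q)$. The only cosmetic difference is that the paper combines the ratio $\varphi(qw)/\varphi(\hbar w)$ into a single exponential $\exp(-\sum_m (q^m-\hbar^m)w^m/(m(1-q^m)))$ before passing to the limit, whereas you treat $\log\varphi(qw)$ and $\log\varphi(\hbar w)$ separately and then invoke ${\rm Li}_2(qw)={\rm Li}_2(w)+O(1-q)$ — but both routes land on the same estimate.
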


The function $Y(\zz,\aa,\xx)$ is known as the \textit{Yang-Yang} function in the literature on integrable systems \cites{Nekrasov:2009ui, Nekrasov:2009uh}.

\begin{proof}
We have
\begin{align}  \label{phifactor}
\dfrac{\varphi(q w)}{ \varphi(\hbar w) } =\prod\limits_{i=0}^{\infty} \dfrac{1- q w q^i}{1-\hbar w q^i}  = \exp\left( - \sum\limits_{m=1}^{\infty}\, \dfrac{(q^m-\hbar^m) w^m}{m(1-q^m)} \right)
\end{align}  
Note that 
\begin{align}  \label{asnstar}
\exp\left( - \sum\limits_{m=1}^{\infty}\, \dfrac{(q^m-\hbar^m) w^m}{m(1-q^m)} \right) = \exp\left( - \dfrac{1}{1-q}\sum\limits_{m=1}^{\infty}\, \dfrac{(1-\hbar^m) w^m}{m^2} \right)\cdot \star \end{align}
where 
$$
\star=\exp\left( - \sum\limits_{m=1}^{\infty}\, \dfrac{ w^m}{m} \left(\dfrac{q^m-\hbar^m}{1-q^m} - \dfrac{1-\hbar^m}{m (1-q)}\right) \right).
$$
Also note that $\star$ is regular at $q=1$:
$$
\left.\star\right|_{q=1} = \exp\left(  \sum\limits_{m=1}^{\infty}\, \dfrac{m+1+m \hbar^m-\hbar^m}{2 m^2}w^m \right)
$$
Thus, we may write:
$$
\exp\left( - \sum\limits_{m=1}^{\infty}\, \dfrac{(q^m-\hbar^m) w^m}{m(1-q^m)} \right) \sim \exp\left( - \dfrac{1}{1-q}\sum\limits_{m=1}^{\infty}\, \dfrac{(1-\hbar^m) w^m}{m^2} \right)  = \exp\left( - \dfrac{ \mathrm{Li}_{2}(w) - \mathrm{Li}_{2}(\hbar w)}{1-q}\right)
$$
where $\sim$ denotes equality modulo terms regular at $q=1$.  Similarly,
$$
\dfrac{1}{\log(q)} \sim -\dfrac{1}{1-q} 
$$
and hence
$$
e(z)  \sim \exp\left(-\dfrac{1}{1-q} \sum\limits_{i\in Q} \log(z_{\#,i}) \log(L_i)  \right)
$$
Combining these results for all factors gives the Yang-Yang function (\ref{YangYang}). 
\end{proof}

More generally, let $\zeta_p$ be a $p$-th primitive complex root of unity of order $p$. The divergent part of (\ref{intrep}) as $q$ approaches $\zeta_p$ can be separated as follows.
\begin{Prop}\label{secondassprop} The integrand of (\ref{intrep}) has the following form
\begin{align}  \label{rootinteg}
\Phi((q-\hbar) P) e(\zz,\xx)  = \exp\left( -\dfrac{Y(\zz^p,\aa^p,\xx^p)}{(1-q^p) p} \right) \cdot \star
\end{align}
where $Y(\zz^p,\aa^p,\xx^p)$ denotes the Yang-Yang function 
(\ref{YangYang}) in which all the variables are raised to the power $p$ and $\star$ denotes a function regular at $q=\zeta_p$. 
\end{Prop}

\begin{proof}
We note that as $q\to \zeta_p$ the divergent terms in the sum of (\ref{phifactor}) correspond to the summands with $m$ divisible by $p$. Separating these terms we obtain
$$
\dfrac{\varphi(q w)}{ \varphi(\hbar w) } =  \exp\left( - \sum\limits_{m=1}^{\infty}\, \dfrac{(q^{p m}-\hbar^{p m}) w^{p m}}{pm(1-q^{pm})} \right) \cdot \star
$$
where $\star$ stands for a factor regular at $q=\zeta_p$. 
Further, as in (\ref{asnstar}) we obtain
$$
 \exp\left( - \sum\limits_{m=1}^{\infty}\, \dfrac{(q^{p m}-\hbar^{p m}) w^{p m}}{pm(1-q^{pm})} \right) \sim  \exp\left( - \dfrac{1}{(1-q^p) p}\sum\limits_{m=1}^{\infty}\, \dfrac{(1-\hbar^{p m}) w^{p m}}{m^2} \right)
$$
where $\sim$ now denotes equality up to multiples regular at $q=\zeta_p$. Next,
$$
e(z)  = \prod\limits_{i\in Q}\, \exp\left( \dfrac{\log(z_{\#,i}^p)\log(L_i^p)}{p \log(q^p)} \right)
$$
From which we see that 
$$
e(z) \sim \exp\left( -\dfrac{1}{(1-q^p)p}\, \sum_{i\in Q}\, \log(z_{\#,i}^p)\log(L_i^p)  \right)
$$
Combining these results for all the factors proves the Proposition. 
\end{proof}

\subsection{Asymptotics of vertex functions via steepest descend method}

By Proposition \ref{propasymp} as $q\to 1$ the descendent vertex functions are given by the integrals 
$$
V^{(\tau)}_i(\zz,\aa,q) = \int_{\gamma}  \exp\left(\frac{Y(\zz,\aa,\xx)}{(1-q)}\right)\, \cdot\star\cdot \dfrac{d \xx}{\xx}
$$
where $\star$ denotes terms regular at $q=1$. Using the steepest descent method with small parameter $\epsilon= 1-q$, we find that 
\begin{equation} \label{firstasym}
V^{(\tau)}_i(\zz,\aa,q) =  \exp\Big(  -\dfrac{\lambda_i(\zz,\aa)}{1-q}\Big) \cdot \star
\end{equation}
where 
\begin{equation} 
\label{lamfun}
\lambda_i(\zz,\aa) = Y(\zz,\aa,\xx_i(\zz,\aa))
\end{equation}
is the value of the Yang-Yang function at its critical point $\xx_i(\zz,\aa)$ on the contour $\gamma$ and $\star$ denotes terms regular at $q=1$.

The critical points of the Yang-Yang function are the solutions to equations
\begin{equation} \label{critval}
\frak{B}_{i,k}(\zz,\aa,\xx):=x_{i,k} \dfrac{\partial}{\partial x_{i,k}} \, Y(\zz,\aa,\xx) =0, \ \ i \in Q, \ \ k =1,\dots, \textrm{rk}(\cV_i). 
\end{equation}
In Section \ref{betheeqseq} we recall an explicit form of these equations, in particular, we recall that (\ref{critval}) is equivalent to a system of algebraic equations also known as ``Bethe equations'' in the theory of integrable systems.

Next, assume that $q$ approaches a complex root of unity $\zeta_p$
of order $p$. By Proposition \ref{secondassprop}, the descendant vertex functions are given by the integrals 
$$
V^{(\tau)}_i(\zz,\aa,q) = \int_{\gamma}  \exp\left(\frac{Y(\zz^p,\aa^p,\xx^p)}{(1-q^p) p}\right)\, \cdot\star\cdot \dfrac{d \xx}{\xx}
$$
where $\star$ is a function regular at $q=\zeta_p$. 
In this case, the critical points for the steepest descend method are given by solutions to the following equations
$$
x_{i,k} \dfrac{\partial}{\partial x_{i,k}} Y(\zz^p,\aa^p,\xx^p) = p (x_{i,k})^p \dfrac{\partial}{\partial (x_{i,k})^p} Y(\zz^p,\aa^p,\xx^p) = 0
$$
i.e., the critical point equations are
\begin{equation}
\label{critvalp}
\frak{B}_{i,k}(\zz^p,\aa^p,\xx^p)=0, \ \ i \in Q, \ \ k =1,\dots, \textrm{rk}(\cV_i).
\end{equation}
with the same $\frak{B}_{i,k}$ as in (\ref{critval}).

Note that if $\xx_i(\zz,\aa)$ is a solution of critical point equations (\ref{critval}) then $\xx_i(\zz^p,\aa^p)^{1/p}$ solves the critical point equations (\ref{critvalp}). Combining this together, we find that near $q=\zeta_p$ the vertex functions have the following behaviour: 
$$
V^{(\tau)}_i(\zz,\aa,q)  = \exp\left( -\dfrac{\lambda_i(\zz^p,\aa^p)}{(1-q^p)p} \right) \cdot \star\,,
$$
where $\star$ denotes a power series in $\zz$ with coefficients which do not have poles at $q=\zeta_p$.  Note that 
$
\lambda(\zz^p,\aa^p)= Y(\zz^p,\aa^p,\xx_i(\zz^p,\aa^p)) 
$
is the critical value function (\ref{lamfun}) in which all parameters are raised to $p$-th power $\zz\to \zz^p$, $\aa\to \aa^p$.

\begin{Cor} \label{corrqsq}
Let $V^{(\tau)}_{i}\left(\zz^p,\aa^p,q^{p^2}\right)$ be the vertex functions in which all K\"ahler parameters $\zz$ and all equivariant parameters $\aa$ are raised to power $p$, and $q$ is raised to power $p^2$. This function has the following form:
$$
V_i^{(\tau)}(\zz^p,\aa^p,q^{p^2}) = \exp\left( -\dfrac{\lambda_i(\zz^p,\aa^p)}{(1-q^p)p} \right)\cdot \star
$$
where
$\star$ is a power series in $\zz$ with coefficients which do not have poles at $q=\zeta_p$.  
\end{Cor}
\begin{proof}
Note that when $q\to \zeta_p$ we have $q^{p^2}\to 1$. Thus, by (\ref{firstasym}) we have :
$$
V_i^{(\tau)}(\zz^p,\aa^p,q^{p^2}) \to \exp\left( -\dfrac{\lambda_i(\zz^p,\aa^p)}{(1-q^{p^2})} \right)\cdot \star
$$
The Corollary holds since 
$$
\dfrac{1}{1-q^{p^2}} = \dfrac{1}{(1-q^{p})(1+q^p+\dots+q^{p(p-1)})} \stackrel{q\to \zeta_p}{\to}\dfrac{1}{(1-q^{p})p}
$$
\end{proof}
We recall that the coefficients of the power series $V^{(\tau)}_i(\zz,\aa,q)$ have poles in $q$ located at the roots of unity.  The last two corollaries imply that the coefficients of the power series 
$$
\dfrac{V_i^{(\tau)}(\zz^p,\aa^p,q^{p^2})}{V^{(\tau')}_i(\zz,\aa,q)}
$$
do not have poles at $q$ given by the primitive roots of unity of order $p$ for any choices of descendants insertion $\tau$ and $\tau'$, i.e., these poles are canceled in the ratio. 

\section{Bethe Equations \label{betheeqseq}}

\subsection{Bethe Equations for Yang-Yang Functions} 
The critical points of the Yang-Yang functions $Y(\zz,\aa,\xx)$ are determined by the equations
\begin{align} \label{betheeqn1}
x_{i,k} \dfrac{\partial Y(\zz,\aa,\xx)}{\partial x_{i,k} } =0, \ \ i \in Q, \ \ k =1,\dots, \textrm{rk}(\cV_i). 
\end{align}
In the theory of integrable spin chains these equations appear as {\it Bethe Ansatz equations}.
These equations can be written in the following convenient form. Let us define the following function
$$
\hat{a}(x) = x^{1/2}-x^{-1/2}
$$
and extend it by linearity to Laurent polynomials with integral coefficients by the rule
$$
\hat{a}\left(\sum_{i} \, m_i x_{i}\right) =  \prod_{i}\, \hat{a}(x_i)^{m_i},
$$
where $x_i$ denote monomials in the equivariant parameters and $x_{i,k}$. 

Let $TX$ be the K-theory class of the tangent space (\ref{tanbun}), written as a Laurent polynomial in Grothendieck roots $\xx= \{x_{i,j}\}$ and the equivariant parameters  for the canonical choice of polarization (\ref{polardef}). The following description of the Bethe equations was obtained in  \cite{Aganagic:2017be}:
\begin{Prop}[Proposition 9,  \cite{Aganagic:2017be}] \label{betheqprop}
Bethe Ansatz equations (\ref{betheeqn1}) have the following form
\begin{align} \label{betheeq}
\hat{a}\left(x_{i,k} \dfrac{\partial}{\partial x_{i,k}} TX \right) =z_i, \ \ i \in Q, \ \ k=1,\dots, rk(\cV_i).
\end{align}
\end{Prop}
\begin{proof}
For the Yang-Yang function (\ref{YangYang}) we write 
$$
Y(\zz,\aa,\xx)= Y_1(\zz,\aa,\xx) + Y_{2}(\zz,\aa,\xx)
$$
with 
$$
Y_1(\zz,\aa,\xx)=\sum\limits_{w\in N(P)}\, ({\rm Li}_2(w) -{\rm Li}_2(\hbar w)), \ \ \ Y_{2}(\xx,\aa,\zz)= \sum\limits_{i \in Q}\, \log(z_{\#,i}) \log(L_i)
$$
where $L_i = \det  \cV_{i}=  \prod_{j}\,{x_{i,j}}$.
Let 
$$
W(w) = {\rm Li}_2(w) -{\rm Li}_2(\hbar w)
$$
be a summand of $Y_1(\zz,\xx)$. Since $w$ is a monomial in the Grothendieck roots $x_{i,k}$ we compute
\begin{align} \label{ap1}
w \dfrac{\partial W(w)}{\partial x_{i,k}}  = \dfrac{\partial w}{\partial x_{i,k}} \sum\limits_{m=1}^{\infty}\, \dfrac{(1-\hbar^m) w^m}{m} =  -\dfrac{\partial w}{\partial x_{i,k}}  \log\left(\dfrac{1-w}{1-\hbar w}\right).
\end{align}
A straightforward calculation gives
\begin{align} \label{ap2}
\log\left(\hat{a}\left(x_{i,k} \dfrac{\partial}{\partial x_{i,k}} (w+\frac{1}{\hbar w}) \right)\right) =\dfrac{x_{i,k}}{w} \dfrac{\partial w}{\partial x_{i,k}}   \log(-\hbar^{1/2}) +\dfrac{x_{i,k}}{w} \dfrac{\partial w}{\partial x_{i,k}} \log\left(\frac{1-w}{1-\hbar w}\right).
\end{align}
Combining (\ref{ap1}) and (\ref{ap2}) we get
\begin{align} \label{termelem}
x_{i,k} \dfrac{\partial W(w)}{\partial x_{i,k}} = \dfrac{x_{i,k}}{w} \dfrac{\partial w}{\partial x_{i,k}} \log(-\hbar^{-1/2})  -  \log\left(\hat{a}\left(x_{i,k} \dfrac{\partial}{\partial x_{i,k}} (w+\frac{1}{\hbar w}) \right)\right)\,.
\end{align}
Wring (\ref{polardet}) as
$$
\det P  = \prod\limits_{i\in Q} \left(\prod_{k} x_{i,k}\right)^{n_i},
$$
we see that
$$
x_{i,k} \dfrac{\partial \det P }{\partial x_{i,k}} = n_i \det P, 
$$
but from $\det P  = \prod_{w\in N(P)} w$ we obtain 
$$
x_{i,k} \dfrac{\partial \det P }{\partial x_{i,k}}  =   \det P \sum_{w\in N(P)} \dfrac{x_{i,k}}{w} \dfrac{\partial w }{\partial x_{i,k}}. 
$$
Comparing the last two expressions we obtain 
\begin{align} \label{nnums}
n_i = \sum_{w\in N(P)} \dfrac{x_{i,k}}{w} \dfrac{\partial w }{\partial x_{i,k}} \,.
\end{align}
From (\ref{tanbun}) for the virtual tangent space we have
$$
TX = \sum\limits_{w\in N(P)} \, \left(w +\dfrac{1}{\hbar w}\right). 
$$
Since
$$
Y_{1}(\zz,\aa,\xx)  =  \sum\limits_{w\in N(P)}\, W(w)
$$
 summing (\ref{termelem}) over $w$ gives
$$
x_{i,k} \dfrac{\partial Y_{1}(\zz,\aa,\xx)}{\partial x_{i,k}} = \log(-\hbar^{-1/2})   \left( \sum\limits_{w}  \dfrac{x_{i,k}}{w} \dfrac{\partial w}{\partial x_{i,k}}\right) -  \log\left(\hat{a}\left(x_{i,k} \dfrac{\partial}{\partial x_{i,k}} TX \right)\right)
$$
or, by (\ref{nnums}) we have
$$
x_{i,k} \dfrac{\partial Y_{1}(\zz,\aa,\xx)}{\partial x_{i,k}} = \log(-\hbar^{-1/2})   n_i -  \log\left(\hat{a}\left(x_{i,k} \dfrac{\partial}{\partial x_{i,k}} TX \right)\right).
$$
Note also that
$$
x_{i,k} \dfrac{\partial Y_{2}(\zz,\aa,\xx)}{\partial x_{i,k}}=\log(z_{i,\#}) = \log(z_{i}) - \log(-\hbar^{1/2}) n_i.
$$
Overall we obtain 
$$
x_{i,k} \dfrac{\partial Y(\zz,\aa,\xx)}{\partial x_{i,k}} =x_{i,k} \dfrac{\partial Y_{1}(\zz,\aa,\xx)}{\partial x_{i,k}}+ x_{i,k} \dfrac{\partial Y_{2}(\zz,\aa,\xx)}{\partial x_{i,k}} =  -  \log\left(\hat{a}\left(x_{i,k} \dfrac{\partial}{\partial x_{i,k}} TX \right)\right) + \log(z_i). 
$$
Therefore the Bethe equations (\ref{betheeq}) are
$$
 -  \log\left(\hat{a}\left(x_{i,k} \dfrac{\partial}{\partial x_{i,k}} TX \right)\right) + \log(z_i) =0,
$$
which, after exponentiation finishes the proof of the Proposition. 
\end{proof}

\begin{Cor}  \label{maincorrsee}
The non-zero\footnote{By the chain rule, $\xx=0$ is always a critical point for $p>1$.} critical points $\hat{\xx}$ of $Y(\zz^p,\aa^p,\xx^p)$ from Corollary \ref{corrqsq} solve Bethe equations (\ref{betheeq}) with all variables raised to the power $p$
\begin{align}\label{frobbeth}
\left.\hat{a}\left(x_{i,k} \dfrac{\partial}{\partial x_{i,k}} TX \right)\right|_{x_{l,m}\to x_{l,m}^p,a_{j}\to a_j^{p},\hbar\to \hbar^{p}} =z^p_i
\end{align}
\end{Cor}

\subsection{Example} 
As an example, let us consider $X=T^{*} \mathrm{Gr(k,n)}$ be the cotangent bundle to the Grassmannian of $k$ hyperplanes in $\mathbb{C}^n$. The corresponding tautological bundles have the form
$$
\cV = x_1+\dots +x_k, \ \ \ \cW = a_1+\dots + a_n
$$
The polarization (\ref{polardef}) equals
$$
P= \cW^{*} \otimes \cV  - \cV^{*}\otimes \cV
$$
or 
$$
P=  \sum_{j=1}^{n} \sum_{i=1}^{k} \dfrac{x_i}{a_j} - \sum_{i,j=1}^{k}\, \dfrac{x_{i}}{x_{j}}\,.
$$
Thus for the virtual tangent space (\ref{tanbun}) we obtain
$$
TX = \sum_{j=1}^{n} \sum_{i=1}^{k} \left(\dfrac{x_i}{a_j} + \dfrac{a_j}{\hbar x_i}\right) - \sum_{i,j=1}^{k}\, \left( \dfrac{x_{i}}{x_{j}} + \dfrac{x_{j}}{\hbar x_{i}} \right) \,.
$$
Then
$$
x_{m} \dfrac{\partial}{\partial x_{m}} TX =  \sum\limits_{j=1}^{n} \left(\frac{x_m}{a_j}-\frac{a_j}{\hbar x_m}\right) - (1+\hbar^{-1}) \sum\limits_{i=1}^{k}\left(\dfrac{x_m}{x_i}-\dfrac{x_i}{x_{m}}\right)
$$
and after simplification, the equation
$$
\hat{a}\left(x_{m} \dfrac{\partial}{\partial x_{m}} TX \right) = z 
$$
takes the form
\begin{align} \label{sl2spin}
\prod\limits_{j=1}^{n} \dfrac{x_m-a_j}{a_j-\hbar x_m}\prod\limits_{i=1}^{k} \,\dfrac{x_j-x_m \hbar}{\hbar x_j -  x_m} = z \hbar^{-n/2}, \qquad  m=1,\dots,k. 
\end{align}
Equations (\ref{sl2spin}) are the well known Bethe Ansatz equations for the $U_{\hbar}(\widehat{\mathfrak{sl}}_2)$ XXZ spin chain on $n$ sites in the sector with $k$ excitations \cite{Reshetikhin:2010si}. The critical point equations (\ref{frobbeth}) describing the asymptotic $q\to \zeta_p$
then take the form:
\begin{align}  \label{pbethe}
\prod\limits_{j=1}^{n} \dfrac{x^{p}_m-a^{p}_j}{a^{p}_j-\hbar^{p} x^{p}_m}\prod\limits_{i,m=1}^{k} \,\dfrac{x^{p}_j-x^{p}_m \hbar^{p}}{\hbar^{p} x^{p}_j -  x^{p}_m} = z^{p} \hbar^{-n p /2}, \qquad m=1,\dots,k. 
\end{align}
For instance, when $k=1,  n=2$, corresponding to $X=T^{*} \mathbb{P}^1$ the sole Bethe equation (\ref{sl2spin}) reads
$$
(x-a_1) (x-a_2) - z \hbar^{-1} (x-a_1 \hbar) (x-a_2 \hbar) =0\,,
$$
and (\ref{pbethe}) has the form
$$
(x^{p}-a^{p}_1)(x^{p}-a^{p}_2)- z^p \hbar^{-p } (x^{p}-a^{p}_1 \hbar^{p})(x^{p}-a^{p}_2 \hbar^{p}) =0\,.
$$
The last two equations are precisely the characteristic equations for the operators $\mathcal{M}_{\cL}(z)$ and $\mathcal{M}_{\cL,\zeta_p}(z)$ for $X=T^{*} \mathbb{P}^1$ discussed in Section \ref{examplesec} (up to an obvious difference in notations $\hbar \to \hbar^{-2}$).

\section{Asymptotics of Fundamental Solutions}
\subsection{Integral representation for the fundamental solution matrix} 
Let us recall that the fundamental solution matrix
for the QDE of a Nakajima variety (\ref{QDEintro}) has the following integral representation \cite{Aganagic:2017be,Aganagic:2017smx,2022arXiv220501596D}. In the K-theoretic stable basis of $K_{T}(X)$ the components of the fundamental solution matrix have the form:
\begin{equation} 
\label{intsolmat}
\Psi_{i,j}(\zz,\aa,q) = \int\limits_{\gamma_{j}} \,\, \Phi((q-\hbar) P) e(\zz,\xx)\, S_{i}(\xx,\aa) \, \prod\limits_{a,b} \dfrac{d x_{a,b}}{ x_{a,b}}\,,
\end{equation}
where $\gamma_{j}$ is the contour in the space of variables $\xx$ defined by (A.12) in \cite{Aganagic:2017smx}. 
The function $S_{i}(\xx,\aa)$ represents the class of the K-theoretic stable envelopes of the torus fixed point $i$\footnote{$S_{i}(\xx,\aa)$ also appears in literature under the name  ``abelianized stable envelope'' or ``weight function''.}. 

In the terminology of Section 
\ref{section2}, the components of the fundamental solution matrix are the vertex functions (\ref{intrep}) with the descendants given by $S_i(\xx,\aa)$:
\begin{align} \label{psiasvert}
\Psi_{i,j}(\zz,\aa,q) = V^{(S_{i}(\xx,\aa))}_j(\zz,\aa,q).
\end{align}
From the Corollary \ref{corrqsq} we obtain that the fundamental solution matrix has the following form
\begin{align} \label{firstasint}
\Psi_{i,j}(\zz,\aa,q)  
=   \exp\left(- \dfrac{\lambda_j(\zz^p,\aa^p)}{(1-q^p) p}  \right) \psi_{i,j}(\zz,\aa,q)  
\end{align}
where $\psi_{i,j}(\zz,\aa,q)$ are certain power series in $\zz$ with coefficients which do not have poles in $q$ at roots of unity of order $p$.

\subsection{Inverse of the fundamental solution matrix}
The inverse of the fundamental solution (\ref{intsolmat}) matrix has the following form 
\begin{equation} \label{secondasimpt}
(\Psi(\zz,\aa,q)^{-1})_{i,j}  =  \exp\left(\dfrac{\lambda_i(\zz^p,\aa^p)}{(1-q^p) p}  \right) \phi_{i,j}(\zz,\aa,q) 
\end{equation}
for some power series $\phi_{i,j}(\zz,\aa,q)$  in $\zz$ with coefficients which are regular at $q$ given by unit roots of order $p$. 
Note that this formula does not follow immediately from (\ref{firstasint}) since  the inverse of the matrix $\psi_{i,j}(\zz,\aa,q)$ may have poles in $q$ at these points.  To show that this is not the case we use a geometric argument, which is outlined below.

Recall that the matrix $\Psi_{i,j}(\zz,\aa,q)$, also known as the {\it capping matrix} \cite{Okounkov:2015aa},  is a partition function counting equivariant quasimaps from $\mathbb{P}^1$  to a Nakajima variety $X$ with relative boundary conditions at $p_1=0 \in \mathbb{P}^1$ and non-singular conditions at $p_2=\infty \in \mathbb{P}^1$, see Fig.~\ref{figcap} for the diagrammatic representation of these boundary conditions adopted in \cite{Okounkov:2015aa}. 

\begin{figure}[h!]
    \centering
    \includegraphics[width=0.3\textwidth]{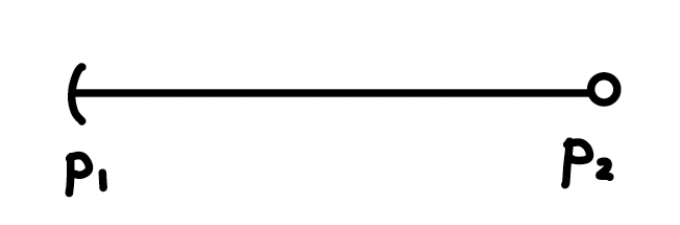}
    \caption{Diagram representing the capping operator}
    \label{figcap}
\end{figure}
\noindent
The parameter $q$ features in this geometric setting as the equivariant parameter given by the character of the tangent space $T_{p_1} \mathbb{P}^1$, for torus $\mathbb{C}^{\times}_q$, which acts on $\mathbb{P}^1$ via rotation so that $(\mathbb{P}^1)^{\mathbb{C}^{\times}_q}=\{p_1=0,p_2=\infty\}$.
Similarly, (see Section 7.1 in \cite{Okounkov:2015aa})  the {\it glue matrix} 
$$
{\bf G}(z) \in  K_{T}(X)^{\otimes 2}(\zz)
$$
is defined as a partition function counting equivariant quasimaps from $\mathbb{P}^1$ with relative boundary conditions at $p_1=0 \in \mathbb{P}^1$ and relative conditions at $p_2=\infty \in \mathbb{P}^1$, see Fig.~\ref{figglue}. 
\begin{figure}[h!]
    \centering
    \includegraphics[width=0.3\textwidth]{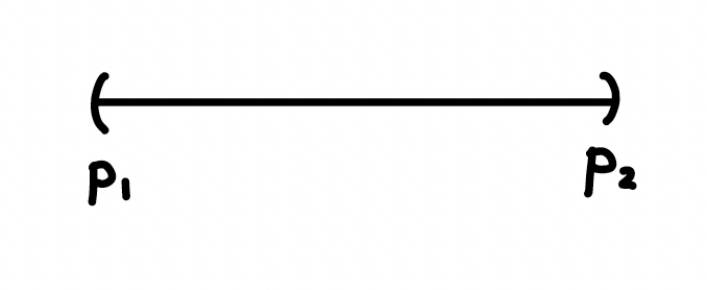}
    \caption{Diagram representing the glue matrix}
    \label{figglue}
\end{figure}
The glue matrix ${\bf G}(z)$ is defined for {\it proper} evaluation maps, in particular, its matrix elements live in the integral $K$-theory and, moreover, 
are independent on the equivariant parameter~$q$.  

Using the $\mathbb{C}^{\times}_{q}$ localization exactly as in Sections 7.1.6 - 7.1.12 of \cite{Okounkov:2015aa}, one can separate the equivariant count for ${\bf G}(z)$  into contributions from $p_1$ and $p_2$ as in Fig. \ref{locpic} below\footnote{This should not be confused with the degeneration/gluing formulas, which are of different nature.}.  Let us briefly recall the idea of the $\mathbb{C}^{\times}_{q}$-localization: if a quasimap from $\mathbb{P}^1$ is $\mathbb{C}^{\times}_{q}$-equivariant, then it can not have singularities in $\mathbb{P}^1\setminus \{0,\infty\}$. Since for   ${\bf G}(z)$ we also impose relative conditions at $0$ and $\infty$ such quasimap has no singularities on the entire  $\mathbb{P}^1$ and therefore is constant. We conclude that the partition function ${\bf G}(z)$ factors to a product of two partition functions -- one coming from the bubbles at $0$ and another coming from the bubbles at $\infty$. The former factor is the usual capping matrix. The latter, from the symmetry of the picture, is given by the transposition of the capping matrix together with the substitution $q\to q^{-1}$.  If $q$ is the character of the tangent space at $T_{p_1}\mathbb{P}^1$ then the character of $T_{p_2}\mathbb{P}^1$ is $q^{-1}$,  which explains the substitution $q\mapsto q^{-1}$. 

\begin{figure}[h!]
    \centering
    \includegraphics[width=0.5\textwidth]{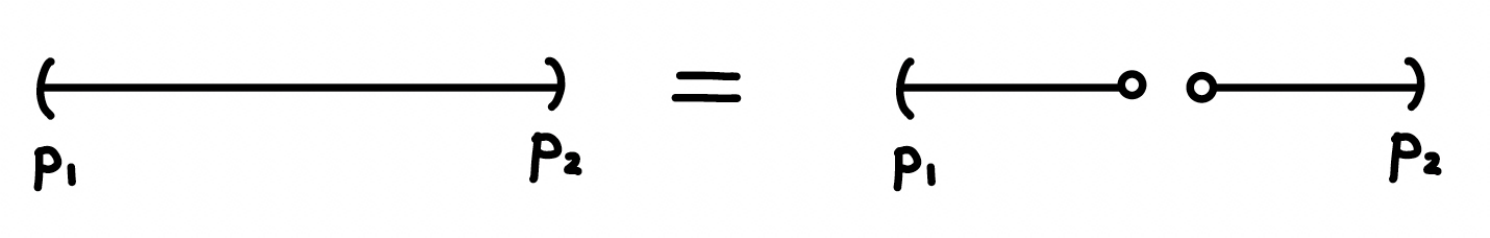}
    \caption{Computing glue matrix by $\mathbb{C}^{\times}_q$ - localization}
    \label{locpic}
\end{figure}
\noindent
This gives us the following formula for the gluing matrix in terms of the capping matrices:
$$
{\bf G}(z) = \Psi(\zz,\aa,q) \, \Psi(\zz,\aa,q^{-1})^{t}
$$
From the last formula we obtain that 
$$
\Psi(\zz,\aa,q)^{-1} = \Psi(\zz,\aa,q^{-1})^{t} {\bf G}(z)^{-1} 
$$
and now (\ref{secondasimpt}) follows from (\ref{firstasint}).

\subsection{Frobenius matrix}
Using the above asymptotics we find:
\begin{Thm} \label{ratthm}
The operator 
\begin{align} \label{ratoffunds}
\Psi(z,a,q)\, \Psi\left(z^p,a^p,q^{p^2}\right)^{-1}\,
\end{align}
has no poles in $q$ at the roots of unity of order $p$.  
\end{Thm}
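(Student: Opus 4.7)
The plan is to extract from both $\Psi(\zz,\aa,q)$ and $\Psi(\zz^p,\aa^p,q^{p^2})$ a common diagonal singular prefactor dictated by the saddle-point analysis of Section \ref{section2}, and to verify that what remains is regular and invertible at $q=\zeta_p$. The key input is \eqref{psiasvert}: column $j$ of $\Psi$ consists of vertex integrals over one and the same contour $\gamma_j$ with descendants $S_i(\xx,\aa)$ that are independent of $q$. Hence the leading singular exponential contributed by the saddle of $\gamma_j$ depends only on the column index, not on the row index.

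First I would apply Corollaries \ref{corrroot} and \ref{corrqsq} entrywise to \eqref{psiasvert}. For $\Psi_{i,j}(\zz,\aa,q)$ the saddle $\xx^p_{0,j}$ of $Y(\zz^p,\xx)$ associated with $\gamma_j$ dominates; for $\Psi_{i,j}(\zz^p,\aa^p,q^{p^2})$ the \emph{same} saddle dominates, because the small parameter $1/(1-q^{p^2})$ collapses to $1/\bigl((1-q^p)p\bigr)$ as $q\to\zeta_p$. Hence both columns share the identical leading exponential
$$E_j(\zz,\aa,q):=\exp\!\Bigl(-\frac{Y(\zz^p,\xx^p_{0,j})}{(1-q^p)\,p}\Bigr),$$
and the Gaussian expansions around these saddles provide factorizations
$$\Psi(\zz,\aa,q)=\tilde A(\zz,\aa,q)\,D(\zz,\aa,q),\qquad \Psi(\zz^p,\aa^p,q^{p^2})=\tilde B(\zz,\aa,q)\,D(\zz,\aa,q),$$
where $D:=\diag_j E_j$ and $\tilde A,\tilde B$ extend holomorphically through $q=\zeta_p$. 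Substituting into the object of interest yields
$$\Psi(\zz,\aa,q)\,\Psi(\zz^p,\aa^p,q^{p^2})^{-1}=\tilde A(\zz,\aa,q)\,\tilde B(\zz,\aa,q)^{-1},$$
in which the exponential diagonal $D$ has cancelled identically.

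The main technical hurdle is then invertibility of $\tilde B(\zz,\aa,\zeta_p)$, as this is what rules out a pole in the quotient. Its leading value is a product of two nondegenerate factors: the Hessian determinants of $Y(\zz^p,\xx)$ at the Bethe roots $\xx^p_{0,j}$, nonzero by simplicity of the Bethe spectrum, and the transition matrix $[S_i(\xx^p_{0,j},\aa^p)]_{i,j}$ of stable envelope classes evaluated at the saddles, which is nondegenerate by the standard triangularity property of stable envelopes at torus fixed points. Alternatively, one can argue indirectly: since $\Psi(\zz^p,\aa^p,q^{p^2})$ is itself a fundamental solution of a shifted QDE, hence invertible at generic $q$ near $\zeta_p$, the factorization $\Psi(\zz^p,\aa^p,q^{p^2})=\tilde B D$ with invertible diagonal $D$ forces $\tilde B$ to remain invertible wherever it is regular. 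Combined with the cancellation above, this yields the claimed pole cancellation.
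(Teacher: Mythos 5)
Your proposal follows the same route as the paper: use \eqref{psiasvert} together with Corollaries \ref{corrqsq} and \ref{corrroot} to extract a column-indexed singular exponential prefactor $E_j=\exp\bigl(-Y(\zz^p,\xx^p_{0,j})/((1-q^p)p)\bigr)$ common to $\Psi(\zz,\aa,q)$ and $\Psi(\zz^p,\aa^p,q^{p^2})$, factor it out as a diagonal matrix on the right, and observe that it cancels in $\Psi\Psi'^{-1}$. This is precisely the paper's argument (the displays \eqref{firstasint}--\eqref{seconfasint} have a typo, $(1-q)p$ should read $(1-q^p)p$, but your version is the intended one).

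Where you go beyond the paper is in flagging that cancellation of $D$ alone gives $\tilde A\,\tilde B^{-1}$, and one must still argue that $\tilde B(\zz,\aa,q)$ is not merely regular but also invertible at $q=\zeta_p$, since otherwise $\tilde B^{-1}$ could reintroduce a pole. The paper's one-line proof silently assumes this. Your instinct here is correct, but the two justifications you offer are not airtight as stated. The Hessian/stable-envelope argument is plausible, but the triangularity of stable envelopes is a statement about restrictions to torus fixed points, not about evaluations at Bethe critical points $\xx^p_{0,j}$; the nondegeneracy of the matrix $[S_i(\xx^p_{0,j},\aa^p)]$ requires a separate argument (e.g., via the fixed-point/Bethe-root correspondence and nondegeneracy of the Bethe spectrum). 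Your ``indirect'' alternative is logically flawed: from $\Psi'=\tilde B D$ with $\Psi'$ invertible at generic $q$ and $D$ invertible, one only gets that $\tilde B$ is invertible at generic $q$; regularity of $\tilde B$ at $\zeta_p$ does not force $\det\tilde B(\zeta_p)\neq 0$. A cleaner route would be to control $\det\Psi(\zz^p,\aa^p,q^{p^2})$ directly via the first-order scalar $q$-difference equation for the determinant of a fundamental solution, whose explicit $q$-Gamma-type solution isolates the $\zeta_p$-singularity and shows it is carried entirely by $\det D$.
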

\begin{proof}
By (\ref{firstasint}) we have
\begin{align} 
\Psi_{i,j}(\zz,\aa,q)  
=   \exp\left(- \dfrac{\lambda_j(\zz^p,\aa^p)}{(1-q^p) p}  \right) \psi_{i,j}(\zz,\aa,q) 
\end{align}
Similarly, from (\ref{secondasimpt}) for $p=1$ we obtain
$$
\Psi_{i,j}(\zz,\aa,q)^{-1}  =  \exp\left(\dfrac{\lambda_i(\zz,\aa)}{(1-q)}  \right) \phi_{i,j}(\zz,\aa,q) 
$$
substituting $\aa\mapsto \aa^p$, $\zz\mapsto \zz^p$ and $q\mapsto q^{p^2}$ to this equality we  also obtain
\begin{align} 
\Psi_{i,j}(\zz^p,\aa^p,q^{p^2})^{-1}  =  \exp\left(\dfrac{\lambda_i(\zz^p,\aa^p)}{(1-q^p) p}  \right) \phi'_{i,j}(\zz,\aa,q)
\end{align}
with some $\phi'_{i,j}(\zz,\aa,q)$ regular at $p$-th unit roots. Thus we obtain:
$$
\Psi(\zz,\aa,q)\, \Psi\left(\zz^p,\aa^p,q^{p^2}\right)^{-1}= \psi(\zz,\aa,q)\, \phi'(\zz,\aa,q)
$$
which does not have poles as desired. 
\end{proof} 

\vspace{3mm}

Let $\mathcal{M}_{\cL}(\zz,\aa)$  and $\mathcal{M}_{\cL,\zeta_p}(\zz,\aa)$ be the operators defined by (\ref{qprod}) and (\ref{Qpoper}) respectively. Let $\mathcal{M}_{\cL}(\zz^p,\aa^p)$ denotes the operator $\mathcal{M}_{\cL}(\zz,\aa)$ with all parameters raised to the power $p$: $\zz^p=(z^p_1,\dots,z^p_n)$ and $\aa^p=(a^p_1,\dots, a^p_m)$.

\begin{Thm} \label{Th:Isopectral}
 The operators $\mathcal{M}_{\cL}(\zz^p,\aa^p)$ and  $\mathcal{M}_{\cL,\zeta_p}(\zz,\aa)$ are conjugate to each other.
\end{Thm}
\begin{proof}
The fundamental solution matrix $\Psi(\zz,\aa,q)$ satisfies the $q$-difference equation
\begin{align} \label{QDEref}
\Psi(\zz q^{\cL},\aa,q) \cL = \mathbf{M}_{\cL}(\zz,\aa,q) \Psi(\zz,\aa,q)\,.
\end{align}
Iterating this equation $p$ times we obtain
\begin{align} \label{QDEiter}
\Psi(\zz q^{p \cL},\aa,q) \cL^p = \mathbf{M}_{\cL^p}(\zz,\aa,q) \Psi(\zz,\aa,q),
\end{align}
where 
\begin{equation}\label{Mlpopp}
\mathbf{M}_{\cL^p}(\zz,\aa,q)  =\mathbf{M}_{\cL}(\zz  q^{(p-1)\cL},\aa,q) \cdots \mathbf{M}_{\cL}(\zz q^{\cL} ,\aa,q)\mathbf{M}_{\cL}(\zz,\aa,q).
\end{equation}
Replacing all K\"ahler $\zz$ and equivariant $\aa$ parameters with their $p$-th powers $\zz^p$ and $\aa^p$, and $q$ with $q^{p^2}$ in (\ref{QDEref}) we obtain
\begin{align} \label{QDEp}
\Psi((\zz q^{p\cL})^p,\aa^p,q^{p^2}) \cL^p = \mathbf{M}_{\cL}(\zz^p,\aa^p,q^{p^2}) \Psi(\zz^p,\aa^p,q^{p^2}).
\end{align}
Dividing (\ref{QDEiter}) by (\ref{QDEp}) we obtain
\begin{align}\label{eq:FrobIntertwiner}
    \Psi(\zz q^{p \cL},\aa,q)\, &\Psi\left((\zz q^{p\cL})^p,\aa^p,q^{p^2}\right)^{-1}\cr
&= 
 \mathbf{M}_{\cL^p}(\zz,\aa,q)\,\cdot \Psi(\zz,\aa,q) \Psi(\zz^p,\aa^p,q^{p^2})^{-1} \,\cdot \mathbf{M}_{\cL}(\zz^p,\aa^p,q^p)^{-1}. 
\end{align}
By Theorem \ref{ratthm} the operator
$$
\mathsf{F}(\zz,\aa,\zeta_p) = \Psi(\zz,\aa,q)\,\Psi\left(\zz^p,\aa^p,q^{p^2}\right)^{-1}\Big\vert_{q=\zeta_p}
$$
is well defined. Thus, specializing (\ref{eq:FrobIntertwiner}) at $q=\zeta_p$ we obtain:
$$
\mathsf{F}(\zz,\aa,\zeta_p) =   \mathcal{M}_{\cL,\zeta_p}(\zz,\aa)\mathsf{F}(\zz,\aa,\zeta_p) \mathcal{M}_{\cL}(\zz^p,\aa^p)^{-1}\,,
$$
where
$$
\mathcal{M}_{\cL,\zeta_p}(\zz,\aa) = \left.\mathbf{M}_{\cL^p}(\zz,\aa,q)\right|_{q=\zeta_p}, \qquad \mathcal{M}_{\cL}(\zz^p,\aa^p) = \left.\mathbf{M}_{\cL}(\zz^p,\aa^p,q^p)\right|_{q=1}.
$$
Rearranging the terms we arrive at
$$
\mathsf{F}(\zz,\aa,\zeta_p) \mathcal{M}_{\cL}(\zz^p,\aa^p) \mathsf{F}(\zz,\aa,\zeta_p)^{-1} =   \mathcal{M}_{\cL,\zeta_p}(\zz,\aa)\,,
$$
from where the Theorem follows.
\end{proof}

\begin{Cor}
Let $\{\lambda_1(\zz,\aa),\lambda_2(\zz,\aa), \dots\}$ be the set of eigenvalues of $\mathcal{M}_{\cL}(\zz,\aa)$ then the eigenvalues of $\mathcal{M}_{\cL \,\zeta_p}(\zz,\aa)$ are given by the set
$\{\lambda_1(\zz^p,\aa^p),\lambda_2(\zz^p,\aa^p),\dots\}$
where 
$\lambda_i(\zz^p,\aa^p)$
is the eigenvalue $\lambda_i(\zz,\aa)$ in which all K\"ahler variables $z_i$ and equivariant variables $a_i$ are substituted by $z^p_i$ and $a^p_i$ respectively. 

\end{Cor}

\subsection{Example}
Consider $X=T^*\mathbb{P}^0$ as the simplest example. 
The fundamental solution, which is a scalar function in this case, reads
$$
\Psi(z,\hbar,q)=\frac{\varphi(\hbar z)}{\varphi(z)} = \prod_{i=0}^{\infty}\, \dfrac{1-\hbar z q^i}{1-z q^i} = \exp\left(\sum\limits_{m=1}^{\infty}\, \dfrac{1-\hbar^m}{1-q^m}\frac{z^m}{m} \right)
$$
which satisfies the QDE:
$$
\Psi(z q ,\hbar,q) = \frac{1-z}{1-\hbar z}\Psi(z,\hbar,q)\,.
$$
Thus
$$
\Psi(z,\hbar,q) \Psi\left(z^p,\hbar^p,q^{p^2}\right)^{-1}=\exp\left(\sum_{m=1}^\infty \frac{(1- \hbar^{m} )z^{m}}{m(1- q^m)}-\frac{(1-\hbar^{pm} )z^{pm}}{m(1-q^{p^2 m})}\right).
$$
The poles in $q$ at roots of unity of order $p$ cancel out and taking the limit $q\to \zeta_p$ we obtain
$$
\mathsf{F}(z,\zeta_p) = \exp\left(\sum\limits_{m=1}^{\infty}\, \dfrac{1-\hbar^m}{m} z^m \delta_m \right),
$$
where
$$
\delta_m = \left\{\begin{array}{cc}
\frac{1}{1-\zeta_p^m}, & p\not{\mid} \, m\\
\frac{1-p}{2}, & p\mid m
\end{array}\right. 
$$

\begin{Rem}
The transformation $\aa\mapsto \aa^p$, $\zz\mapsto \zz^p$, $q\mapsto q^{p^2}$
may not look symmetric. This visible asymmetry may have the following simple interpretation in terms of $q$-quantized coordinate rings: let is consider an algebra of $q$-commuting coordinates $X$, $Y$ with relations 
$$
X Y =q Y X
$$
The Frobenius transformation acts on the coordinates by $X\to X^p$, $Y\to Y^p$, which are now subject to relations 
$$
X^p Y^p = q^{p^2} Y^p X^p
$$
which means that $q\to q^{p^2}$ under the Frobenius. In this context, $q$-deformations of Frobenius structures were also investigated by M.\,Kontsevich, as communicated to the second author during his visit to IHES in the Summer of 2025.
\end{Rem}

\begin{Rem}
Since the analysis of the preceding sections relies on the integral representations of vertex functions from \cite{Aganagic:2017smx} we would like to comment on these integral formulae.

The vertex functions of Nakajima quiver varieties, come with natural power series representations: 
\begin{align} \label{powserrep}
V(\zz,\aa) = \sum_{\degs \in H_2(X,\mathbb{Z})_{\textrm{eff}}} {\zz^{\degs}} I({q^{\degs}} , q, \aa) 
\end{align}
where the sum runs over the cone of effective curves in $X$ and  $I(\xx , \aa)$ is a certain product of $q$-gamma functions constructed from the underlying quiver in a combinatorial way. These power series representations come from computing the vertex function via equivariant localization, i.e., the coefficients represent the contributions of the torus fixed points on the quasimap moduli spaces. The purpose of the integral formula of \cite{Aganagic:2017smx} is to merely represent this power series as a summation over the residues of a global function. 
Thus, it may be more natural to derive the asymptotic behavior of the vertex function directly from the series representation (\ref{powserrep}) using the multidimensional Laplace method for sums. According to this method, the asymptotics are governed by the critical points of the phase function $\log I(\xx,q,\aa)$ as $q\to\zeta_p$. The resulting computations are equivalent to the critical-point analysis of the the preceding sections.

The analytic aspects of the integral representations of vertex functions have been studied extensively in the literature on integrable systems. Halan\"as and Ruijsenaars \cite{HR1} (see section 2) constructed an analytic formula for the eigenfunction of the Ruijsenaars-Macdonald operators which in our geometric language corresponds to the vertex functions for the cotangent bundle to the complete flag variety. The infinite products in the integrand can be analytically continued to the double sine functions.
More generally, the correspondence between integrability of the Rujisenaars system and geometry of quiver varieties was addressed in \cite{Koroteev:2018a} (see Theorem 4.8). 
 
\end{Rem}

\section{$p$-Curvature and Frobenius}\label{Sec:pCurvFrob}
In this final section, we discuss a reduction of the isospectrality Theorem \ref{Th:Isopectral} to a field of finite characteristic. 
First, we recall that over $\mathbb{C}$ in the cohomological limit a $q$-difference equation gives rise to a quantum differential equation. Second, we consider a similar construction over $p$-adic numbers $\mathbb{Q}_p$ and then reduce it to the finite field $\mathbb{F}_p$.

\subsection{Quantum connection}
The quantum differential equation for a Nakajima variety $X$ has the form: 
\begin{align} \label{qdecoh}
\nabla_i \tilde{\Psi}_{coh}(z) =0, \qquad \nabla_i =   z_i \frac{\partial}{\partial z_i} - s\, C_i(\zz,\uu), \ \ \ i=1,\dots, l,
\end{align}
where $C_i(\zz,\uu)$ is the operator of quantum multiplication by the first Chern class $c_1(L_i)$ in the equivariant quantum cohomology of $X$, $\uu=(u_1,\dots,u_m)$ denote the equivariant parameters coming from torus action on $X$ and  $s \in \mathbb{C}^{\times}$ denotes the equivariant parameter corresponding to the action of the torus $\mathbb{C}^{\times}$ on the source of the stable maps $C\cong \mathbb{P}^1$. Together, this gives a flat connection $\nabla=(\nabla_1,\dots, \nabla_l)$.

For Nakajima varieties, the three-point functions associated with quantum multiplication by divisors receive no contributions from the orbifold points and are integral-valued, i.e., they lie in $H^{\bullet}_{T}(pt) \cong \mathbb{Z}[\uu]$. Explicitly, Maulik and Okounkov \cite{2012arXiv1211.1287M}  (see Theorem 10.2.1) proved that for a quiver variety $X$ the operator of quantum multiplication by divisor has the following explicit form
\begin{equation} \label{moformula}
   C_i(\zz,\uu)  = c_1(L_i) \cup\, - h \sum\limits_{\alpha \cdot \theta>0} \alpha(c_1(L_i))\frac{\zz^\alpha}{1-\zz^\alpha}e_{\alpha} e_{-\alpha}
\end{equation}
where the sum is taken over the positive roots of the Lie algebra corresponding to the quiver $X$. Positivity is defined by the choice of stability condition $\theta$.  The lowering and raising operators $e_\alpha$ and $e_{-\alpha}$ act in the stable basis of the cohomology of $X$ with integral coefficients. Expanding the rational coefficients in (\ref{moformula}) in the Taylor series in $\zz$, we see that the expansion coefficients are always integers. In summary, the matrix elements of $   C_i(\zz,\uu)$ in the stable basis lie in 
\begin{align} \label{integrality}
(C_i(\zz,\uu))_{a,b} \in \mathbb{Z}[\uu][[\zz]] \cap \mathbb{Q}(\uu,\zz).
\end{align}

Let $\mathbb{Q}_p$ be the field of $p$-adic numbers, $\mathbb{Z}_p\subset \mathbb{Q}_p$ be the ring of integers.
The above integrality allows for base change to $\mathbb{Q}_p$ or to $\mathbb{F}_p$, i.e., a specialization of (\ref{qdecoh}) at $\uu =(u_1,\dots,u_m) \in \mathbb{Z}^m_p$ or at $\uu =(u_1,\dots,u_m) \in \mathbb{F}^m_p$ defines a connection on $\mathbb{Q}^l_p$ and $\mathbb{A}^{l}(\mathbb{F}_p)$ respectively. 

We would like to emphasize the following point. The integrality of quantum multiplication by divisors in the cohomological stable basis allows us to associate to connection $\nabla$ a certain connection over a field of characteristic $p$, which we will denote by the same symbol. The $p$-curvature is an important invariant of this associated field $\mathbb{F}_p$ -- the connection which we investigate below.

\subsection{Quantum connection as limit of $q$-difference equation}
The quantum differential equation (\ref{qdecoh}) can be obtained from the K-theoretic quantum difference equation (\ref{QDEintro}) as follows.
Let $\epsilon$  be a complex parameter with a small complex norm, i.e. $|\epsilon|< 1$. Consider the following substitution:
\begin{align} \label{specpar}
q=1+\epsilon + O(\epsilon^2) ,\quad a_i = q^{s u_i}=  1+s \epsilon u_i + O(\epsilon^2),\quad i=1,\dots,m,
\end{align}
where $s$ is a formal complex parameter, 
i.e., the cohomological equivariant parameters $u_i$ are the first terms in the $\epsilon$-expansions of the K-theoretic equivariant parameters  $a_i$.  Then, the following expansion is known (see \cite{2015CMaPh.334..629B} Theorem 6 and Corollary 7, and \cite{2024arXiv240502473Z} Theorem 7.2):
\begin{equation} \label{Mexp}
{\bf M}_{\cL_i} (\zz,\aa,q)  = 1+ \epsilon \, s\, C_i(\zz,\uu)  +O(\epsilon^2), \ \ \textrm{where} \ \  \uu=(u_1,\dots,u_m),
\end{equation}
This expansion is understood in loc. cit. as expansion of elements in quantum groups acting in the equivariant K-theory and cohomology of $X$. In order to view this as an equality of matrices we need to fix bases in $K_{T}(X)$ and $H_{T}(X)$ which are also related by this limiting procedure. A natural choice is the stable basis of $K_{T}(X)$, which is known to degenerate to the stable basis of $H_{T}(X)$ in the limit $\epsilon \to 0$. The equality (\ref{Mexp}) is then understood as follows: ${\bf M}_{\cL_i} (\zz,\aa,q)$ in the left side of (\ref{Mexp})  is the matrix of the corresponding operator in the K-theoretic stable basis and $C_i(\zz,\uu)$ in the right side is the matrix of quantum multiplication by $c_1(L_i)$ in the stable basis of equivariant cohomology. Note also that for this choice of bases we have the integrality
of the corresponding matrix elements (\ref{integrality}).

Next, let $q^{z_i \frac{\partial}{\partial z_i}}$ denote the operator acting by shifting the K\"ahler parameters $z_i\mapsto z_i q$:
$$
q^{z_i \frac{\partial}{\partial z_i}} f(z_1,\dots, z_i, \dots, z_l) = f(z_1,\dots, z_i q, \dots ,z_l). 
$$
Clearly, from (\ref{specpar}) we have
\begin{align} \label{qdifspec}
q^{z_i \frac{\partial}{\partial z_i}} = 1+ \epsilon z_i \frac{\partial}{\partial z_i} + O(\epsilon^2). 
\end{align}
Let us write the quantum difference equation (\ref{QDEintro}) in the form
\begin{align} \label{kconnec}
\tilde{\Psi}(\zz q^{\cL},\aa,q)  = {\bf M}_{\cL} (\zz,\aa,q) \tilde{\Psi}(\zz,\aa,q), 
\end{align}
where 
$$
\tilde{\Psi}(\zz,\aa,q) = {\Psi}(\zz,\aa,q) e(\zz,\aa,q), \ \ \ e(\zz,\aa,q)= \prod\limits_{i} e^{\frac{\ln(z_i) \ln(L_i)}{\ln(q)}}
$$
so that $e(\zz q^{\cL},\aa,q) = \cL e(\zz,\aa,q)$. This normalization is more useful than (\ref{powerser}) because $\tilde{\Psi}(\zz,\aa,q)$ reduces to $\tilde{\Psi}_{coh}(z)$ from (\ref{qdecoh}) in the cohomological limit $\epsilon \to 0$:
\begin{equation} \label{epsexpans}
\tilde{\Psi}(\zz,\aa,q) = \tilde{\Psi}_{coh}(z) + O(\epsilon)
\end{equation}
We write (\ref{kconnec}) as
$$
\Big({\bf M}_{\cL_i} (\zz,\aa,q)^{-1} q^{z_i \frac{\partial}{\partial z_i}}\Big) \tilde{\Psi}(\zz,\aa,q) =\tilde{\Psi}(\zz,\aa,q) 
$$
Substituting (\ref{Mexp}), (\ref{qdifspec}) and (\ref{epsexpans}) into this equation, we obtain the quantum differential equation (\ref{qdecoh}) as the linear term in the $\epsilon$-expansion.

\subsection{Extension $\mathbb{Q}_p(\pi)$}
Let $p$ be a prime number and let $|\cdot|_p$ denote the multiplicative $p$-adic norm 
$$
|p|_p = \dfrac{1}{p}.
$$
We  consider an extension $\mathbb{Q}_p(\pi)$ where $\pi$ denotes a root of the equation $\pi^{p-1}=-p$. 
Clearly, the $p$-adic norm of $\pi$ equals:
\begin{equation} \label{pinorm}
|\pi|_p = \dfrac{1}{p^{\frac{1}{p-1}}}<1.
\end{equation}
The field $\mathbb{Q}_p(\pi)$ contains all $p$th roots of unity $\zeta_p$, which are of the form
\begin{equation}
\label{proots}
\zeta_p = 1+ b \pi  + O(\pi^2)\,,\qquad b=0,1,\dots, p-1.   
\end{equation} 
In the ring of integers $\mathbb{Z}_p[\pi] \subset \mathbb{Q}_p(\pi)$ the ideal $(\pi)$ is maximal with the residue field
\begin{equation} \label{finfield}
\mathbb{Z}_p[\pi]/(\pi) = \mathbb{F}_p. 
\end{equation} 
Thanks to the relation $\pi^{p-1}=-p$ the $p$-adic expansions in $\pi$ may acquire additional terms which do not appear in the expansions over $\mathbb{C}$ as the following Lemma demonstrates:

\begin{Lem} \label{lem1} 
Let $\alpha$ and $\beta$ be two $N\times N$ matrices with   $|\alpha_{i,j}|_p\leq 1$,  $|\beta_{i,j}|_p\leq 1$. Then 
$$
(1+ \pi \alpha +\pi^2 \beta)^p = 1+\pi^p (\alpha^p-\alpha) + O(\pi^{p+1}) 
$$
\end{Lem}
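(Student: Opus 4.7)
The plan is to apply the usual binomial theorem to $(1+A)^p$ with $A = \pi\alpha + \pi^2\beta$. Even though $\alpha$ and $\beta$ do not commute, the single operator $A$ commutes with itself and with $1$, so $(1+A)^p = \sum_{k=0}^{p} \binom{p}{k}\, A^k$ holds verbatim in the noncommutative operator ring. After that, the proof is pure $\pi$-adic valuation bookkeeping.

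First I would record the three inputs that drive the estimate: (i) $\pi^{p-1} = -p$, hence $v_\pi(p) = p-1$; (ii) for $1 \le k \le p-1$ the binomial coefficient $\binom{p}{k}$ has exact $p$-adic valuation $1$, so $v_\pi(\binom{p}{k}) = p-1$; and (iii) expanding $A^k$ noncommutatively as a sum over words $w$ of length $k$ in $\{\alpha,\beta\}$, the word $w$ with $j$ occurrences of $\beta$ contributes $\pi^{k+j}\, w$. In particular $A^k = \pi^k \alpha^k + O(\pi^{k+1})$, with the error term being the sum over all words containing at least one $\beta$.

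Next I would combine these estimates to isolate the contribution modulo $\pi^{p+1}$. For $2 \le k \le p-1$ one has $v_\pi(\binom{p}{k} A^k) \ge (p-1) + k \ge p+1$, so the entire middle range is absorbed into the error. The $k = 0$ term contributes $1$, the $k = 1$ term contributes $p\cdot A = -\pi^{p-1}(\pi\alpha + \pi^2\beta) \equiv -\pi^p \alpha \pmod{\pi^{p+1}}$, and the $k = p$ term contributes $A^p \equiv \pi^p \alpha^p \pmod{\pi^{p+1}}$. Summing yields $1 + \pi^p(\alpha^p - \alpha) + O(\pi^{p+1})$, which is the claim.

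There is no serious obstacle: the essential content is simply that among the noncommutative-binomial summands, only $k = 1$ and $k = p$ produce terms at the critical order $\pi^p$, and for $k = p$ only the all-$\alpha$ word is low enough in $\pi$ to matter. Consequently the noncommutativity between $\alpha$ and $\beta$ is irrelevant at this order, because every occurrence of $\beta$ is pushed to order $\pi^{p+1}$ or higher.
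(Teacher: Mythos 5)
Your proof is correct and follows essentially the same route as the paper's: binomial-expand $(1+A)^p$ with $A=\pi\alpha+\pi^2\beta$ and use $\pi^{p-1}=-p$ to kill all terms below order $\pi^p$. Your version is actually cleaner: you make the $\pi$-adic valuation bookkeeping explicit (showing $v_\pi\bigl(\binom{p}{k}A^k\bigr)\ge p+1$ for $2\le k\le p-1$, and isolating the $k=1$ and $k=p$ contributions at order $\pi^p$), whereas the paper's proof is terser and contains a typo in its final display, writing $\pi^p\alpha^2$ where it should read $\pi^p\alpha^p$ (they agree only for $p=2$, the running illustrative case in that section).
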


\begin{proof}
We get
\begin{align}
(1+ \pi \alpha +\pi^2 \beta)^p &=\sum_{k=0}^p \binom{p}{k}(\pi \alpha +\pi^2 \beta)^k
=1 + p(\pi \alpha + \pi^2 \beta) + \frac{p(p-1)}{2}(\pi \alpha + \pi^2 \beta)^2\cr 
&+\dots + (\pi \alpha +\pi^2 \beta)^p\,. \label{psum}
\end{align}
Recall that $p = -\pi^{p-1}$. Since $|\alpha_{i,j}|_p\leq 1$, and  $|\beta_{i,j}|_p\leq 1$ we see that the lowest term $\pi^p$ in the $p$-adic norm appear in the second and the last term of the sum (\ref{psum}): 
\begin{equation}
    (1+ \pi \alpha +\pi^2 \beta)^p = 1 -\pi^p \alpha+\pi^p \alpha^p + O(\pi^{p+1})\,,
\end{equation}
where in the second term $-\pi^p \alpha = p\pi \alpha$.
\end{proof}

\subsection{$p$-curvature}
The $p$-curvature of connection $\nabla_i$ is defined in components by
\begin{equation} \label{pcurvature}
C_p(\nabla_i) =  \nabla_i^p - \nabla_i \pmod{p} 
\end{equation}
Modulo $p$ all derivatives in (\ref{pcurvature}) cancel out and the $p$-curvature is a linear operator.

We assume that the cohomological equivariant parameters have integral $p$-adic values values $\uu=(u_1,\dots,u_m) \in \mathbb{Z}^{m}_p$. Then, from (\ref{integrality}) we obtain
\begin{align} \label{intcoef}
(C_i(\zz,\uu))_{a,b} \in \mathbb{Z}_p[[\zz]] \cap \mathbb{Q}_p(\zz).
\end{align}
Thus reductions modulo $p$ for coefficients of such power are well defined and we obtain
$$
C_p(\nabla_i) \in Mat_{N}(\mathbb{F}_p(z)[s]).
$$ 
An interesting problem is to determine the spectrum of this operator.

\begin{Rem}
The connection (\ref{qdecoh}) is sometimes called `logarithmic connection' in order to distinguish it from the following connection 
\begin{equation}\label{eq:LogConnComm}
    \tilde{\nabla}_i =   \frac{\partial}{\partial z_i} - \frac{s}{z_i}\, C_i(\zz,\uu) , \qquad i=1,\dots, l.
\end{equation}
In terms of $\tilde{\nabla}_i$ the $p$-curvature has a shorter expression due to the following Lemma
\begin{Lem}  \label{lem2}
The following holds modulo $p$
\begin{equation}
\left(\nabla_i\right)^p - \nabla_i = z_i^p\tilde{\nabla}_i^p \pmod{p}
\end{equation}
\end{Lem}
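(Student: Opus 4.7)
\noindent The plan is to observe first that $\nabla^L = z\nabla$ directly from \eqref{eq:Connections}, and to note that since $A(z)$ acts on sections by matrix multiplication it commutes with $z$. Consequently the canonical commutation relation $[\nabla, z] = [\partial_z, z] = 1$ holds. The asserted identity thus reduces to a universal statement in the associative algebra generated by $z,D$ subject to $[D,z]=1$, namely
\begin{equation*}
(zD)^p \;\equiv\; zD + z^p D^p \pmod{p}
\end{equation*}
for any pair $(z,D)$ with $[D,z]=1$. No property of $A(z)$ beyond its commutativity with $z$ is used, and the problem becomes one of pure Weyl-algebra combinatorics.

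I would establish this by first proving the normal-ordering identity
\begin{equation*}
(zD)^n \;=\; \sum_{k=1}^{n} S(n,k)\, z^k D^k
\end{equation*}
by induction on $n$, where $S(n,k)$ is the Stirling number of the second kind. The inductive step rests on the formula $D\, z^k = z^k D + k z^{k-1}$ together with the standard recursion $S(n+1,k) = k\, S(n,k) + S(n,k-1)$. Setting $n=p$ and reducing modulo $p$, the lemma follows from the classical congruence
\begin{equation*}
S(p,k) \;\equiv\; \delta_{k,1} + \delta_{k,p} \pmod{p}.
\end{equation*}
For $k=p$ this is $S(p,p)=1$ by definition; for $k<p$ it can be read off from the closed form $S(p,k) = \tfrac{1}{k!}\sum_{j=0}^{k}(-1)^j\binom{k}{j}(k-j)^p$ by applying Fermat's little theorem $(k-j)^p \equiv k-j \pmod{p}$ and observing that the resulting alternating sum $\sum_j (-1)^j \binom{k}{j}(k-j)$ vanishes for $k\geq 2$, while $k!$ is invertible modulo $p$ in this range.

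The substantive content of the lemma is thus the mod-$p$ collapse of all intermediate Stirling coefficients $S(p,k)$ for $1<k<p$ -- the same Frobenius-type miracle that underlies the well-definedness of the $p$-curvature itself. Beyond this classical combinatorial fact the proof is formal manipulation in the Weyl algebra, and I do not anticipate any serious technical obstacle. The only point requiring care is that the Stirling-number expansion is unchanged when one passes from the ``flat'' Euler operator $z\partial_z$ to the twisted operator $z\nabla$; this is guaranteed by the observation at the outset that $[\nabla,z] = 1$ regardless of the specific connection matrix $A(z)$.
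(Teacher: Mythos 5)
Your proof is correct, and while both arguments ultimately hinge on the same Stirling-number congruence $S(p,k) \equiv \delta_{k,1} + \delta_{k,p} \pmod p$, your framing is genuinely different from the paper's. The paper first establishes
$$(z\partial)^p f \;\equiv\; z f' + z^p f^{(p)} \pmod p$$
for scalar functions (the case $A=0$), and then conjugates by the gauge transformation $\exp\bigl[-s\int^z A(y)\,dy\bigr]$ to transport the identity from $z\partial$ to $z\nabla$. You avoid the twisting step entirely by noticing at the outset that $\nabla^L = z\nabla$ and that $[\nabla, z] = [\partial_z, z] = 1$ because $A(z)$ commutes with $z$; the normal-ordering identity $(zD)^n = \sum_k S(n,k)\,z^k D^k$ then applies verbatim with $D=\nabla$ in place of $\partial_z$. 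This buys you two things: it isolates the combinatorial content of the lemma as a universal identity in the Weyl algebra (so the argument works in any associative algebra over $\mathbb{F}_p$ with $[D,z]=1$), and it sidesteps the formal primitive $\int^z A$, which is harmless over $\mathbb{C}$ but somewhat awkward to interpret in positive characteristic where the paper actually applies the lemma. The paper's route, in exchange, makes visible the statement about scalar higher derivatives before introducing the connection. Your handling of the Stirling congruence (Fermat's little theorem on $(k-j)^p$, vanishing of $\sum_j(-1)^j\binom{k}{j}(k-j)$ for $k\ge 2$, invertibility of $k!$ for $k<p$) matches the paper's and is complete.
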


\begin{proof}
First, let us assume that the connection term in \eqref{eq:LogConnComm} is trivial $C_i(\zz,\uu)=0$. Then for a section $f$ (we suppress index $i$ for brevity) we get
$$
(z\tilde{\nabla})^p f(z) = (z\partial)\dots (z\partial)f(z)
$$
where one acts $z\partial=z\frac{\partial}{\partial z}$ on $f$ successively $p$ times. Let
\begin{equation}\label{eq:logconexpn}
    \left(z\partial\right)^n f(z) = z f'(z)+\sum_{k=2}^{n-1}a_{n,k} z^k f^{(k)}(z)+z^n f^{(n)}(z)\,.
\end{equation}
One can immediately observe the following recursive relation between the coefficients $a_{n,k}$
$$
a_{n+1,k} = a_{n,k-1}+k\cdot a_{n,k}\,,
$$
This recursive relation is solved by Stirling numbers of the second kind -- the number of ways to partition a set of $n$ objects into $k$ non-empty subsets:
$$
a_{n,k}=\frac{1}{k!}\sum_{j=1}^k(-1)^{k-j} \binom{k}{j} j^n\,.
$$
When $n=p$ is prime by the little Fermat theorem $j^p\equiv j$ mod $p$ so 
$$
a_{p,k}\equiv\frac{1}{k!}\sum_{j=1}^k(-1)^{k-j} \binom{k}{j} j\,.
$$
The latter expression is equal to zero thanks to the following observation for $k>1$
$$
0=\frac{d}{dx}(1-x)^k\vert_{x=1} = \sum_{j=1}^k(-1)^{j} \binom{k}{j} j\,.
$$
Thus $a_{p,k}\equiv 0 \pmod{p}$ for $1<k<p$ and
\begin{equation}\label{eq:LemmaA0}
\left(z\partial\right)^p f(z) \equiv z f'(z)+z^p\partial^p f(z) \pmod{p}\,.
\end{equation}

The complete statement of the Lemma for the nontrivial connection follows after twisting the equation \eqref{eq:logconexpn}
with the following operator 
\begin{equation}\label{eq:twistingA}
\exp\left[ s\int^z  \frac{C(y,u)}{y} dy\right]\cdot \left(z\partial\right)^n \cdot \exp \left[-s\int^z  \frac{C(y,u)}{y} dy\right] = (z\tilde\nabla)^n
\end{equation}
and similarly on its right hand side before reducing both expressions modulo $p$. All contributions involving coefficient $a_{n,k}$ will remain trivial after the reduction.
\end{proof}
\end{Rem}

\subsection{Expansions at roots of unity}
From (\ref{specpar}) we see that the quantum differential equation appears from the expansion of the $q$-difference equation when $|q-1|<1$ in the complex norm. In this Section, we consider a similar expansion in the $p$-adic norm.  A new feature of this approach is that $q$ may be assumed to be close to a $p$-th root of unity. We show that in the vicinity of these points the quantum difference connections reduce to the corresponding $p$-curvatures.

Assume that $q \in \mathbb{Q}_p(\pi)$ is a $p$-th root of unity. By (\ref{proots}), without loss of generality, we may assume
\begin{equation} \label{exp1}
q =  1+ \pi  + O(\pi^2)\,.
\end{equation}
We also assume that 
\begin{equation}\label{exp2}
a_i = q^{s u_i} = 1+ \pi s u_i + O(\pi^2), \quad i=1,\dots, l. 
\end{equation}
for $u_i \in \mathbb{Z}_p$ and a formal variable $s$ as above. The expansions (\ref{exp1})  and (\ref{exp2}) are the $p$-adic analogs of (\ref{specpar}) where $\pi$ is considered `small' in the $p$-adic norm (\ref{pinorm}).

Using the shift operator, we can write the inverse of the iterated product \eqref{eq:QpoperFull} as
\begin{equation}\label{eq:itprodDef}
{\bf M}_{\cL^{p}_i} (\zz,\aa,q)^{-1} q^{p z_i \frac{\partial}{\partial z_i}} = \left(\mathbf{M}_{\cL_i}(\zz,\aa,q)^{-1} q^{z_i \frac{\partial}{\partial z_i}}\right)^p\,.
\end{equation}
As in (\ref{Mexp}) in the order up to $\pi$ one gets
$$
\mathbf{M}_{\cL_i}(\zz,\aa,q)^{-1} q^{z_i \frac{\partial}{\partial z_i}} =  1+ \pi \nabla_i(z) +O(\pi^2) \,.
$$
Next, thanks to Lemma \ref{lem1} for $\alpha = \nabla_i$ we get
$$
\left(\mathbf{M}_{\cL_i}(\zz,\aa,q)^{-1} q^{z_i \frac{\partial}{\partial z_i}}\right)^p = 1+ \pi^p \left(\nabla_i^p-\nabla_i\right) + O(\pi^{p+1})\,,
$$
or
\begin{equation}
\label{lefteqrr}
\frac{{\bf M}_{\cL^p_i} (\zz,\aa,q)^{-1} -1}{\pi^p} \equiv\left(\nabla_i^p-\nabla_i\right)  \pmod{\pi}.
\end{equation}
Note that by (\ref{finfield}),  this precisely gives the $p$-curvature:
\begin{equation}
\label{lefteq}
\frac{{\bf M}_{\cL^p_i} (\zz,\aa,q)^{-1} -1}{\pi^p} \equiv C_p(\nabla_i)  \pmod{\pi}\,.
\end{equation}

The above analysis demonstrates that (\ref{eq:itprodDef}) considered over $\mathbb{Q}_p(\pi)$ is the correct $q$-difference generalization of the $p$-curvature: it reduces to the $p$-curvature in the first nontrivial term of the $\pi$-expansion around a $p$-th root of unity.

\vspace{3mm}

Next let us consider the same expansion for $\mathbf{M}_{\cL_i}(\zz^p,\aa^p,q^p)^{-1}$.
As in Lemma \ref{lem1} we have
$$
a^p_i=(1+\pi s u_i + O(\pi^2))^p = 1+ \pi^p(s^p u_i^p-s u_i)  +O(\pi^{p+1}).
$$
Since we assume $u_i \in \mathbb{Z}_p$, it follows that $u^p_i = u_i + O(\pi^{p-1})$ (since $u_i^p= u_i \pmod {p}$) we also have
$$
a^p_i=(1+\pi s u_i + O(\pi^2))^p = 1+ \pi^p(s^p-s) u_i^p  +O(\pi^{p+1}).
$$
Using this expansion, from (\ref{Mexp}) we find:
$$
\mathbf{M}_{\cL_i}(\zz^p,\aa^p,q^p)^{-1} = 1-  C_i(\zz^p,\uu^p) \pi^p (s^p-s) + O(\pi^{p+1}).
$$
In other words,
\begin{equation}\label{eqright}
\frac{\mathbf{M}_{\cL_i}(\zz^p,\aa^p,q^p)^{-1}-1}{\pi^{p}} \equiv  (s-s^p) C_i(\zz^p,\uu^p)  \pmod{\pi}\,.
\end{equation}

\subsection{The Isospectrality Theorem} 
Let us summarize the discussion of the previous sections. Let $C_i(\zz,\uu)$ be the operator of quantum multiplication by the divisor $c_1(L_i)$ in the equivariant quantum cohomology of a Nakajima variety. We denote by the same symbol $C_i(\zz,\uu)$ the matrix of this operator in the cohomological stable basis.  Let us specialize the equivariant parameters so that $\uu=(u_1,\dots, u_m) \in \mathbb{Z}^{m}_p$. 
Thanks to (\ref{intcoef}), this matrix has a well defined reduction mod $p$ which gives a  $\mathbb{F}_p$-connection.  The $p$-curvature of this connection is
\begin{equation}
\label{equan1}
C_p(\nabla_i)=(\nabla_i)^p -\nabla_i \in  Mat_{N}(\mathbb{F}_p(\zz)[s]).
\end{equation}

Let $(s-s^p) C(\zz^p,\uu^p)$ be the operator obtained from $ C(\zz,\uu)$ via substitution $\zz^p=(z_1^p,\dots, z_l^p)$ and $\uu^p=(u_1^p,\dots,u_m^p)$ and multiplication by polynomial $s-s^p$. Modulo $p$ we obtain the following matrix
\begin{equation} \label{euqn2}
(s-s^p) C(\zz^p,\uu^p) \in Mat_{N}(\mathbb{F}_p(\zz)[s]).
\end{equation}
\begin{Thm}\label{Th:Isopectral1}
Matrices (\ref{equan1}) and (\ref{euqn2}) have equal sets of the eigenvalues.  
\end{Thm}
\begin{proof}
The operator in (\ref{equan1}) corresponds to the right-hand side of (\ref{lefteq}), while the operator in (\ref{euqn2}) is defined by the right-hand side of (\ref{eqright}). Congurences modulo $\pi$ imply congruences modulo $p$ since $\pi^{p-1}=-p$. Finally, by Theorem \ref{Th:Isopectral}, the left-hand sides of (\ref{lefteq}) and (\ref{eqright}) share the same spectrum.
\end{proof}

For a matrix $A=(a_{i,j})$ let $A^{(1)}$ denote its Frobenius twist, i.e., the matrix obtained from $A$ by raising all matrix elements to the $p$-th power $A^{(1)}=(a^p_{i,j})$. Clearly, over a field of characteristic $p$, we have
$$
C(\zz,\uu)^{(1)}  =  C(\zz^p,\uu^p)
$$
We then can reformulate the last theorem in the form in which it was formulated in \cite{EV2024}:

\begin{Thm}[\cite{EV2024}]\label{Th:EVpencil}
    The spectra of the periodic pencil $(s-s^p)C(\zz,\uu)^{(1)}$ and the $p$-curvature $C_p(\nabla_i)$ are isomorphic over field of characteristic $p$.
\end{Thm}

Finally, we note that the spectrum of the quantum operators $C_i(\zz,\uu)$ has an explicit description in terms of Bethe Ansatz \cite{Aganagic:2017be}. The last theorem thus fully determines the spectrum of the $p$-curvature.

\bibliography{cpn1}

\end{document}